\newtheorem{theor}{Theorem}
\newtheorem{lem}{Lemma}[section]
\newtheorem{prop}[lem]{Proposition}
\newtheorem{cor}[lem]{Corollary}
\numberwithin{equation}{section}
\def\bea{\begin{eqnarray}}
\def\eea{\end{eqnarray}}
\def\beq{\begin{equation}}
\def\eeq{\end{equation}}
\def\be{\beq\begin{array}{c}}
\def\ee{\end{array}\eeq}
\def\bse{\begin{subequations}}
\def\ese{\end{subequations}}
\def\pbea{\begin{eqnarray*}}
\def\peea{\end{eqnarray*}}
\def\pbeq{\begin{equation*}}
\def\peeq{\end{equation*}}
\def\pbe{\pbeq\begin{array}{c}}
\def\pee{\end{array}\peeq}
\def\pbse{\begin{subequations*}}
\def\pese{\end{subequations*}}
\def\a{\alpha}
\def\b{\beta}
\def\d{\delta}
\def\CC{{\mathbb C}}
\def\e{\varepsilon}
\def\D{\Delta}
\def\phi{\varphi}
\def\ov{\overline}
\def\Uqg{U_q(\widehat{\ggt})}
\def\advadva{U_q(A_2^{(2)})}
\def\advadvaC{\ov{U}_q(A_2^{(2)})}
\def\advadvaCC{\ov{\ov{U}}_q(A_2^{(2)})}
\def\rf#1{~\!\!(\ref{#1})}
\def\rfs#1{~\!\!\ref{#1}}
\def\resl{\res\limits}
\def\KK{{\mathcal K}}
\def\RR{{\mathcal R}}
\def\RRR{\bar{\RR}}
\def\Xvect{\rX}
\def\Yvect{\rY}
\def\Zvect{\rZ}
\def\Vvect{\rV}
\def\Mmat{\rM}
\def\T{\rT}
\DeclareMathOperator*{\Sym}{\rm Sym}
\newenvironment{proof}{{\flushleft\it Proof:}} {\hfill$\square$ \\}
\newenvironment{rem}{{\flushleft\it Remark.}}{}
\begin{document}
\bigskip
\hfill{ITEP-TH-64/09}
\begin{center}
{\Large\bf Weight function for the quantum \\ affine algebra $\advadva$}
\end{center}
\bigskip
\bigskip
\begin{center}
{\bf
S. Khoroshkin$^{\star\diamond}$\footnote{E-mail: khor@itep.ru}, \,
A. Shapiro$^{\star\bullet}$\footnote{E-mail: alexander.m.shapiro@gmail.com}}\\
\bigskip
{\it
  $^\star$ Institute of Theoretical \& Experimental Physics, 117259, Moscow, Russia \\
  $^\diamond$ Higher School of Economics, Myasnitskaya ul., 20, 101000, Moscow, Russia \\
  $^\bullet$ Moscow State University, Department of Mechanics and Mathematics, Leninskie gory, 119992, Moscow, Russia
}
\bigskip
\bigskip

\end{center}

\begin{abstract}
In this article, we give an explicit formula for the universal weight function
of the quantum twisted affine algebra $\advadva$. The calculations use the technique
of projecting products of Drinfeld currents onto the intersection of Borel subalgebras
of different types.
\end{abstract}

\section{Introduction}

A universal weight function of a quantum affine algebra is a family of functions with values in its Borel subalgebra satisfying certain coalgebraic properties. It can be used either to construct solutions of $q$-difference Knizhnik-Zamolodchikov equations \cite{TV} or to construct the off-shell Bethe vectors, thus generalizing the nested Bethe ansatz procedure \cite{KR}. A general construction of a weight function has been suggested in \cite{EKP}. It uses the existence of Borel subalgebras of two different types in a quantum affine algebra. One type is related to the realization of $\Uqg$ as a quantized Kac-Moody algebra, and the other comes from the ``current'' realization of $\Uqg$ proposed by Drinfeld in \cite{D}. It was proved in \cite{EKP} that the weight function of the quantum affine algebra can be represented as the projection of a product of so called Drinfeld currents on the intersection of Borel subalgebras of $\Uqg$ of different types.

In the paper \cite{T} the method of the algebraic Bethe ansatz was developed for the case of the quantum twisted affine algebra $\advadva$. In particular  there was suggested an inductive procedure for the construction of off-shell Bethe vectors for finite dimensional representations of $\advadva$.

The main result of the present paper consists in the applying of the approach of \cite{EKP} to obtain an explicit formula for the universal weight function of the quantum twisted affine algebra $\advadva$. As a corollary we derive an integral presentation for the factors of the universal $R$-martix of $\advadva$, connecting the usual and the Drinfeld comultiplications, as well as for the universal $R$-matrix itself. An analogous formula for the universal $R$-martix of $U_q(\widehat\slg_2)$ is presented in \cite{KP}.

The paper is organized as follows. In Section $2$, we describe two realizations of the quantum twisted affine algebra $\advadva$. In Section $3$, we adapt the general result of \cite{DKP1} and \cite{EKP} to the quantum twisted affine algebra $\advadva$. In Section $4$, we introduce the so called ``composite currents'' and derive their analytical properties, which are crucial for our investigations. Section $5$ contains an exposition of the main result. Here combinatorial formulae for the universal weight function and other related objects are presented. Most of the proofs are collected in Section $6$. Finally, Section $7$ contains examples of the universal weight function.

\section{Two descriptions of the quantum affine algebra $\advadva$}

\subsection{The Drinfeld realization}  \label{D-realiz}

Quantum affine algebra $\advadva$\footnote
{with zero central charge and dropped grading element.}
is an associative algebra generated by elements
$$
e_n, f_n,\;\, n\in\Z,\quad a_n,\;\, n\in\Z\setminus\hc{0},\quad {\rm and} \quad K_0^{\pm 1},
$$
subject to certain commutation relations. The relations are given as formal power series identities for the following generating functions (currents):
$$
e(z)=\sum_{k\in\Z}e_kz^{-k},\quad f(z)=\sum_{k\in\Z}f_kz^{-k},
\quad K^\pm(z)=K_0^{\pm 1}\exp\left(\pm(q-q^{-1})\sum_{n>0}a_{\pm n}z^{\mp n}\right)
$$
as follows:
\bea
(z-q^2w)(qz+w)e(z)e(w) &=& (q^2z-w)(z+qw)e(w)e(z),                           \label{ee}     \\
(q^2z-w)(z+qw)f(z)f(w) &=& (z-q^2w)(qz+w)f(w)f(z),                           \label{ff}     \\
K^+(z)e(w)K^+(z)^{-1} &=& \a(w/z)e(w),                                       \label{Kpe}    \\
K^+(z)f(w)K^+(z)^{-1} &=& \a(w/z)^{-1}f(w),                                  \label{Kpf}    \\
K^-(z)e(w)K^-(z)^{-1} &=& \a(z/w)^{-1}e(w),                                  \label{Kme}    \\
K^-(z)f(w)K^-(z)^{-1} &=& \a(z/w)f(w),                                       \label{Kmf}    \\
K^\pm(z)K^\pm(w) &=& K^\pm(w)K^\pm(z),                                       \label{KK}     \\
K^-(z)K^+(w) &=& K^+(w)K^-(z),                                               \label{KmKp}   \\
e(z)f(w)-f(w)e(z) &=& \frac1{q-q^{-1}}\hr{\d(z/w)K^+(w)-\d(z/w)K^-(z)}.      \label{ef}
\eea
where
\beq \label{a}
\a(x)=\dfrac{(q^2-x)(q^{-1}+x)}{(1-q^2x)(1+q^{-1}x)},
\eeq
and $\d\left(\dfrac zw\right)$ is a formal Laurent series, given by
\beq \label{delta}
\d(z/w) = \sum\limits_{n\in\Z}(z/w)^n.
\eeq

Generating functions $e(z),\,f(z)$ satisfy the cubic Serre relations (see \cite{D}):
\bea \label{Serre}
\Sym_{z_1,z_2,z_3} \hr{q^{-3}z_1 - (q^{-2}+q^{-1})z_2 + z_3} e(z_1)e(z_2)e(z_3) &=& 0, \\
\Sym_{z_1,z_2,z_3} \hr{q^{-3}z_1^{-1} - (q^{-2}+q^{-1})z_2^{-1} + z_3^{-1}} f(z_1)f(z_2)f(z_3) &=& 0, \\
\Sym_{z_1,z_2,z_3} \hr{q^3 z_1^{-1} - (q^{2}+q^{})z_2^{-1} + z_3^{-1}} e(z_1)e(z_2)e(z_3) &=& 0, \\
\Sym_{z_1,z_2,z_3} \hr{q^3 z_1 - (q^{2}+q^{})z_2 + z_3} f(z_1)f(z_2)f(z_3) &=& 0.
\eea
Define the {\it principal} grading on the algebra $\advadva$ by the relations
\beq \label{grading}
\deg e_n= 3n+1,\qquad \deg f_n=3n-1,\qquad \deg a_n=3n,\qquad \deg K_0^{\pm 1}=0.
\eeq
The assignment \rf{grading} defines $\advadva$ as a graded associative algebra.

The Hopf algebra structure on $\advadva$ can be defined as follows:
\bea
\D^{(D)}(e(z)) &=& e(z)\otimes 1+K^-(z)\otimes e(z), \label{DDe}\\
\D^{(D)}(f(z)) &=& 1\otimes f(z)+f(z)\otimes K^+(z), \label{DDf}\\
\D^{(D)}(K^\pm(z)) &=& K^\pm(z)\otimes K^\pm(z),     \label{DDK}\\
a^{(D)}(e(z)) &=& -(K^-(z))^{-1}e(z),                \label{aDe}\\
a^{(D)}(f(z)) &=& -f(z)(K^+(z))^{-1},                \label{aDf}\\
a^{(D)}(K^\pm(z)) &=& (K^\pm(z))^{-1},               \label{aDK}\\
\e^{(D)}(e(z)) &=& 0,                                \label{cDe}\\
\e^{(D)}(f(z)) &=& 0,                                \label{cDf}\\
\e^{(D)}(K^\pm(z)) &=& 1.                            \label{cDK}
\eea
where $\D^{(D)}$, $\e^{(D)}$ and $a^{(D)}$ are the comultiplication, the counit and the antipode maps respectively. We will call $\D^{(D)}$ a \emph{Drinfeld comultiplication}. The map $\D^{(D)}$ defines the structure of topological bialgebra on ${\mathcal A}=\advadva$ which means that for any $n\in \Z$ this map sends an element $x$ of the graded component ${\mathcal A}[n]$ to the sum of spaces $\sum_{k\geq 0} {\mathcal A}[m(n)-k]\otimes {\mathcal A}[-m(n)+k]$, where $m(n)$ is an integer such that for each $k$ the corresponding summand on the right hand side of $\D^{(D)}(x)$ is finite. See \cite[Section 2]{EKP} for details.

\subsection{Chevalley generators of $\advadva$}

Another realization of $\advadva$ can be obtained using Chevalley generators. Quantum affine algebra $\advadva$ is an associative algebra generated by elements $e_{\pm\a}$, $e_{\pm(\d-2\a)}$, $k^{\pm1}_{\a}$, $k^{\pm1}_{\d-2\a}$, satisfying the following relations:
\pbea
k_{\a}e_{\pm\a}k^{-1}_{\a} = q^{\pm1}e_{\pm\a}, &\quad&
k_{\a}e_{\pm(\d-2\a)}k^{-1}_{\a} = q^{\mp 2}e_{\pm(\d-2\a)},
\\
k_{\d-2\a}e_{\pm{\a}}k^{-1}_{\d-2\a} = q^{\mp 2}e_{\pm{\a}}, &\quad&
k_{\d-2\a}e_{\pm{(\d-2\a)}}k^{-1}_{\d-2\a} = q^{\pm4}e_{\pm{(\d-2\a)}},
\\
k_{\a}^2 k_{\d-2\a} = 1, &\quad&
\hs{e_{\pm\a},e_{\mp(\d-2\a)}} = 0,
\\
\hs{e_\a,e_{-\a}} = \dfrac{k_\a-k_\a^{-1}}{q-q^{-1}}, &\quad&
\hs{e_{\d-2\a},e_{-(\d-2\a)}} = \dfrac{k_{\d-2\a}-k_{\d-2\a}^{-1}}{q-q^{-1}},
\\
(\ad_{q}e_{\pm\a})^5 e_{\pm(\d-2\a)} = 0, &\quad&
(\ad_{q}e_{\pm(\d-2\a)})^2 e_{\pm\a} = 0,
\peea
where
\pbea
(\ad_{q}e_{\pm\a})(x) &=& e_{\pm\a}x - k_\a^{\pm 1}xk_\a^{\mp 1}e_{\pm\a},
\\
(\ad_{q}e_{\pm(\d-2\a)})(x) &=& e_{\pm(\d-2\a)}x - k_{\d-2\a}^{\pm 1}xk_{\d-2\a}^{\mp 1}e_{\pm(\d-2\a)}.
\peea
The principal grading is given by relations
\pbe
\deg e_{\pm\a}=\pm 1,\qquad \deg e_{\pm(\delta-2\a)}=\pm 1, \qquad
\deg k_\a^{\pm 1}=\deg k_{\delta-2\a}^{\pm 1}=0.
\pee

The Hopf algebra structure associated with this realization can be defined by
\pbea
\D(e_{\a})=e_{\a}\otimes 1+k_{\a}\otimes e_{\a},
&\quad&
\D(e_{\d-2\a})=e_{\d-2\a}\otimes 1+k_{\d-2\a}\otimes e_{\d-2\a},
\\
\D(e_{-\a})=1\otimes e_{-\a}+e_{-\a}\otimes k^{-1}_{\a},
&\quad&
\D(e_{-(\d-2\a)})=1\otimes e_{-(\d-2\a)}+e_{-(\d-2\a)}\otimes k^{-1}_{\d-2\a},
\\
\D(k^{\pm1}_{\a})\,=\,k^{\pm1}_{\a}\otimes k^{\pm1}_{\a},
&\quad&
\D(k^{\pm1}_{\d-2\a})\,=\,k^{\pm1}_{\d-2\a}\otimes k^{\pm1}_{\d-2\a},
\\
\e(e_{\pm\a})=0,
&\quad&
\e(e_{\pm(\d-2\a)})=0,
\\
\e(k^{\pm1}_{\a})=1,
&\quad&
\e(k^{\pm1}_{\d-2\a})=1,
\\
a(e_{\a})=-k^{-1}_{\a}e_{\a},
&\quad&
a(e_{\d-2\a})=-k^{-1}_{\d-2\a}e_{\d-2\a},
\\
a(e_{-\a})=-e_{-\a}k_{\a},
&\quad&
a(e_{-(\d-2\a)})=-e_{-(\d-2\a)}k_{\d-2\a},
\\
a(k^{\pm1}_{\a}) =  k^{\mp1}_{\a},
&\quad&
a(k^{\pm1}_{\d-2\a}) =  k^{\mp1}_{\d-2\a},
\peea
where $\D$, $\e$ and $a$ are the comultiplication, the counit and the antipode maps respectively. We will call $\D$ "standard" comultiplication.

\subsection{The isomorphism between two realizations}
The isomorphism between two descriptions of the quantum affine algebra $\advadva$ can be established by the following maps:
\beq \label{isom}
\begin{array}{rclrcl}
k_{\d-2\a}    &\to& K_0^{-2},                            &\qquad k_{\a} &\to& K_0,     \\
e_{\d-2\a}    &\to& a(qf_1f_0-f_0f_1)K_0^{-2},           &\qquad e_{\a} &\to& e_{0},   \\
e_{-(\d-2\a)} &\to& aK_0^2(q^{-1}e_0e_{-1}-e_{-1}e_0),   &\qquad e_{-\a} &\to& f_{0},
\end{array}
\eeq
where $a=\frac{1}{\sqrt{q+q^{-1}}}$.
\begin{rem} For generic $q$, the mapping above is an isomorphism of associative algebras. It preserves the counit, but does not respect the comultiplication maps $\D$ and $\D^{(D)}$. The relation between the two comultiplications is described in Section \rfs{coprod}.
\end{rem}

\section{Orthogonal decomposition and twists of Drinfeld double} \label{backgr}

In this section, we adapt the theory of orthogonal decompositions and twists of Drinfeld double developed in \cite{EKP} to the $\advadva$ case.

\subsection{Borel subalgebras in $\advadva$}\label{bor-sub}

Recall that in any quantum affine algebra there exist Borel subalgebras of two types. Borel subalgebras of the first type come from the Drinfeld (``current'') realization of $\advadva$. Let $U_F$ denote the subalgebra of $\advadva$, generated by the elements $K_0^{\pm 1},\,f_n,n\in\Z;\,a_n,n>0$, and let $U_E$ denote the subalgebra of $\advadva$, generated by the elements $K_0^{\pm 1},\,e_n,n\in\Z;\,a_n,n<0$. They are Hopf subalgebras of $\advadva$ with respect to comultiplication $\D^{(D)}$. The ``current'' Borel subalgebra $U_F$ contains the subalgebra $U_f$ generated by elements $f_n,\,n\in\Z$. The ``current'' Borel subalgebra $U_E$ contains the subalgebra $U_e$, generated by elements $e_n,\,n\in\Z$.

Borel subalgebras of the second type are obtained via the Chevalley realization. Let $U_q(\bgt_+)$ and $U_q(\bgt_-)$ denote a pair of subalgebras of $\advadva$ generated by elements
\pbe
e_{\a},\;\,e_{\d-2\a},\;\,k^{\pm1}_{\a} \qquad\text{and}\qquad
e_{-\a},\;\,e_{-(\d-2\a)},\;\,k^{\pm1}_{\a}
\pee
respectively. In terms of the Drinfeld realization, these subalgebras are generated by elements
\pbe
K_0^{\pm 1},\;\,e_0,\;\,qf_1f_0-f_0f_1 \qquad\text{and}\qquad K_0^{\pm 1},\;\,f_0,\;\,q^{-1}e_0e_{-1}-e_{-1}e_0,
\pee
respectively. The algebras $U_q(\bgt_\pm)$ are Hopf subalgebras of $\advadva$ with respect
to comultiplication~$\D$ and, moreover, coideals with respect to the Drinfeld comultiplication. More precisely:
\begin{prop} \label{prop3.1}
For any element $x\in U_q(\bgt_+)$ we have an equality
\be\label{coid}
\D^{(D)}(x)=x\otimes1+\suml{i=0}{\infty}a_i\otimes b_i \quad\;\;\text{for some}\quad
b_i\in U_q(\bgt_+),\quad\text{such that}\quad \e(b_i)=0.
\ee
\end{prop}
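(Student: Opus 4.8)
The plan is to use that $\D^{(D)}$ is an algebra homomorphism, so that the property in question is closed under multiplication. Set $C=\{\,x\in\advadva:\D^{(D)}(x)\in\advadva\otimes U_q(\bgt_+)\,\}$. If $\D^{(D)}(x)=x\otimes1+\sum_i a_i\otimes b_i$ and $\D^{(D)}(y)=y\otimes1+\sum_j c_j\otimes d_j$ with all $b_i,d_j\in U_q(\bgt_+)$, then multiplying out $\D^{(D)}(xy)=\D^{(D)}(x)\,\D^{(D)}(y)$ produces the leading term $xy\otimes1$ together with terms whose right tensor legs are $d_j$, $b_i$ or $b_id_j$, all lying in $U_q(\bgt_+)$; since also $\D^{(D)}(1)=1\otimes1$, the set $C$ is a subalgebra. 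Next I would observe that the precise shape \rf{coid} is equivalent to membership in $C$: decomposing $U_q(\bgt_+)=\CC\cdot 1\oplus(\ker\e\cap U_q(\bgt_+))$ and applying the counit axiom $(\mathrm{id}\otimes\e)\circ\D^{(D)}=\mathrm{id}$ forces the left leg of the component tensored with $1$ to equal $x$ itself and the remaining right legs to have vanishing counit. Hence it suffices to prove that the three algebra generators $K_0^{\pm1}$, $e_0$ and $qf_1f_0-f_0f_1$ of $U_q(\bgt_+)$ belong to $C$.

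The generator computations rely on a structural statement that I would prove as a separate lemma: $U_q(\bgt_+)$ contains all the positive Drinfeld modes, i.e. $e_m$ for $m\geq0$, $f_m$ for $m\geq1$, and $a_m$ for $m\geq1$; equivalently every coefficient $K^+_{[m]}$ in $K^+(z)=\sum_{m\geq0}K^+_{[m]}z^{-m}$ lies in $U_q(\bgt_+)$. In the principal grading these are exactly the root vectors attached to the positive affine roots $m\d+\a$, $m\d-\a$ and $m\d$, each of which has nonnegative coordinates in the simple roots $\a$ and $\d-2\a$, so they should be expressible as iterated $q$-commutators of the two generators $e_0=e_\a$ and $qf_1f_0-f_0f_1=a^{-1}e_{\d-2\a}K_0^2$. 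Concretely, $(\ad_q e_{\d-2\a})(e_\a)$ is, up to a scalar and a power of $K_0$, equal to $f_1$ (nonzero by the Serre relation $(\ad_q e_{\d-2\a})^2 e_\a=0$), giving $f_1\in U_q(\bgt_+)$; then the mode form of \rf{ef} gives $[e_0,f_1]=K_0a_1$, hence $a_1$; and the mode forms of \rf{Kpe} and \rf{Kpf}, which trade $K^+_{[m]}e_l$ and $K^+_{[m]}f_l$ for $e_{l+j}K^+_{[m-j]}$ and $f_{l+j}K^+_{[m-j]}$, provide the inductive step raising the mode index. I expect this lemma---an $A_2^{(2)}$ analogue of Beck's identification of Drinfeld and Chevalley root vectors---to be the main obstacle, as it requires a careful induction organised by the principal grading and using the quadratic relations \rf{ee}, \rf{ff}, \rf{ef} together with the Serre relations.

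Granting the lemma, the checks on $K_0^{\pm1}$ and $e_0$ are immediate. Extracting the $z^0$-component of \rf{DDK} gives $\D^{(D)}(K_0^{\pm1})=K_0^{\pm1}\otimes K_0^{\pm1}=K_0^{\pm1}\otimes1+K_0^{\pm1}\otimes(K_0^{\pm1}-1)$, and $K_0^{\pm1}-1\in\ker\e\cap U_q(\bgt_+)$. Taking the constant term of \rf{DDe} and using $K^-(z)=\sum_{m\geq0}K^-_{[m]}z^{m}$ yields $\D^{(D)}(e_0)=e_0\otimes1+\sum_{m\geq0}K^-_{[m]}\otimes e_m$; only the nonnegative modes $e_m$ appear, all of them lie in $U_q(\bgt_+)$ by the lemma, and $\e(e_m)=0$ by \rf{cDe}.

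The remaining generator $g=qf_1f_0-f_0f_1$ carries the real content. Substituting the mode form $\D^{(D)}(f_p)=1\otimes f_p+\sum_{m\geq0}f_{p-m}\otimes K^+_{[m]}$ of \rf{DDf} into $q\,\D^{(D)}(f_1)\D^{(D)}(f_0)-\D^{(D)}(f_0)\D^{(D)}(f_1)$, the lowest term is $g\otimes K_0^2$, which after multiplying by $\D^{(D)}(K_0^{-2})=K_0^{-2}\otimes K_0^{-2}$ becomes the desired leading term $e_{\d-2\a}\otimes1$ via \rf{isom}. Every other right leg is either $g$, or a product of $f_1$ with coefficients $K^+_{[m]}$---all in $U_q(\bgt_+)$ by the lemma---except for the dangerous family $\sum_m f_{1-m}\otimes\bigl(qK^+_{[m]}f_0-f_0K^+_{[m]}\bigr)$, whose right legs contain $f_0\notin U_q(\bgt_+)$. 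The crux is to show these close up inside $U_q(\bgt_+)$: writing the mode form of \rf{Kpf} as $K^+_{[m]}f_0=q^{-1}\sum_{j=0}^{m}c_j\,f_jK^+_{[m-j]}$, where $\a(x)^{-1}=q^{-1}\sum_{j\geq0}c_jx^{j}$ with $c_0=1$, one obtains $qK^+_{[m]}f_0-f_0K^+_{[m]}=\sum_{j=1}^{m}c_j\,f_jK^+_{[m-j]}$: the $j=0$ contribution cancels exactly because $c_0=1$, removing $f_0$, and each surviving summand has $f_j$ with $j\geq1$ and $K^+_{[m-j]}$, both in $U_q(\bgt_+)$. Right-multiplication by $K_0^{-2}$ keeps all nonleading right legs in $U_q(\bgt_+)$, and since $\e(f_p)=0$ and $\e(K^+_{[m]})=\d_{m,0}$ by \rf{cDf} and \rf{cDK}, these legs have vanishing counit, which establishes \rf{coid}. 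Once the structural lemma is in hand, this single cancellation and the coproduct bookkeeping complete the proof.
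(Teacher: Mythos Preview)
Your proof is correct and follows essentially the same route as the paper: reduce to generators by multiplicativity, handle $K_0^{\pm1}$ and $e_0$ directly from \rf{DDe}--\rf{DDK}, and for $qf_1f_0-f_0f_1$ show that the dangerous right legs $qK^+_{[m]}f_0-f_0K^+_{[m]}$ fall into $U_q(\bgt_+)$ using the mode expansion of \rf{Kpf}. The only difference is one of explicitness: you isolate and sketch the structural lemma that the positive Drinfeld modes $f_{m\geq1},\,a_{m\geq1},\,K^+_{[m]}$ lie in $U_q(\bgt_+)$, whereas the paper uses this tacitly (writing the key step as ``$K^+(z)f_0=q^{-1}f_0K^+(z)\bmod U_q(\bgt_+)$'') and leans on the Cartan--Weyl basis theory of \cite{KT} for that input.
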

In particular, subalgebra $U_q(\bgt_+)$ is a left coideal of $\advadva$
with respect to coproduct $\D^{(D)}$, i.e.
$\D^{(D)}(U_q(\bgt_+))\subset \advadva\otimes U_q(\bgt_+)$.
Analogously, $U_q(\bgt_-)$ is a right coideal of $\advadva$
with respect to the same coproduct.

The proof of Proposition \rfs{prop3.1} is given in Section \rfs{section6.0}.

Let $U_F^+$, $U_f^-$, $U_e^+$ and $U_E^-$ denote the following intersections
of Borel subalgebras:

\pbea
U_f^-=U_F\cap U_q(\bgt_-), &\qquad U_F^+=U_F\cap U_q(\bgt_+), \\
U_e^+=U_E\cap U_q(\bgt_+), &\qquad U_E^-=U_E\cap U_q(\bgt_-).
\peea
Here the upper sign indicates the Borel subalgebra
$U_q(\bgt_\pm)$ containing the given algebra, and the lower letter indicates the ``current''
Borel subalgebra $U_F$ or $U_E$ that it is intersected with. These letters are capitals
if the subalgebra contains imaginary root generators $a_n$ and are lower case otherwise.
The intersections have coideal properties with respect to both comultiplications.
In particular, Proposition \rfs{prop3.1} and its analog for Borel subalgebra $U_q(\bgt_-)$ imply the inclusions
\pbea
\D^{(D)}(U_F^+) \subs U_F\otimes U_F^+, &\qquad
\D^{(D)}(U_f^-) \subs U_f^-\otimes U_F, \\
\D^{(D)}(U_E^-) \subs U_E^-\otimes U_E, &\qquad
\D^{(D)}(U_e^+) \subs U_E\otimes U^+_e.
\peea

\subsection{Projections}

Let $\Ac$ be a bialgebra with multiplication map $\mu$, comultiplication map $\delta$, unit $1$ and counit $\e$. We say that its subalgebras $\Ac_1$ and $\Ac_2$ determine an orthogonal decomposition of $\Ac$ (see \cite{ER}), if:

\begin{itemize}
\item[(i)] The algebra $\Ac$ admits a decomposition $\Ac=\Ac_1\Ac_2$, such that the multiplication map
$\mu\colon\Ac_1\otimes \Ac_2 \to \Ac$ establishes an isomorphism of linear spaces.
\item[(ii)] $\Ac_1$ is the left coideal of $\Ac$, $\Ac_2$ is the right coideal of $\Ac$:
\pbe
\delta(\Ac_1)\subs \Ac\otimes \Ac_1\ ,\qquad \delta(\Ac_2)\subs \Ac_2\otimes\Ac.
\pee
\end{itemize}

There is a pair of biorthogonal decompositions of ``current'' Borel subalgebras, equipped
with comultiplications $\D^{(D)}$ and its opposite, namely $U_E=U_e^+U_E^-$ and $U_F=U_f^-U_F^+$. The condition $(i)$ is a corollary of the theory of Cartan-Weyl bases (see \cite{KT}); the condition $(ii)$ follows from Proposition \rfs{prop3.1}.

Let $P^+$ and $P^-$ denote the projection operators corresponding to the second decomposition, so that for any $f_+\in U_F^+$ and for any $f_-\in U_f^-$
\be\label{Pdef}
P^+(f_-f_+)=\e(f_-)f_+, \qquad P^-(f_-f_+)=f_-\e(f_+).
\ee
Projection operators related to the first decomposition are denoted by $P^{*+}$ and $P^{*-}$, in such a way that for any $e_+\in U_e^+$ and for any $e_-\in U_E^-$
\be\label{Pdefa}
P^{*+}(e_+e_-)=e_+\e(e_-), \qquad P^{*-}(e_+e_-)=\e(e_+)e_-.
\ee

\subsection{Completions}\label{subsect_compl}

Algebra $\advadva$ admits a natural completion $\advadvaC$, which can be described as
the minimal extension of $\advadva$, that acts on every left highest weight and on every
right lowest weight $\advadva$-module with respect to the standard Borel subalgebras(see \cite{DKP2}).

In a highest weight representation of $\advadva$, any matrix coefficient of an arbitrary
product of currents $f(z_1)\dots f(z_n)$ is a Laurent polynomial in $\hc{z_1,\dots,z_n}$ over the ring of formal power series in variables $\hc{z_2/z_1,z_3/z_2,\dots,z_n/z_{n-1}}$. It converges to a rational function in the domain $|z_1|\gg|z_2|\gg\dots\gg|z_n|$. The latter observation and the commutation relation \rf{ff}, which
dictate the way of taking an analytic continuation from the domain above, allow us to regard
products of currents as operator-valued meromorphic functions with values in $\advadvaC$.

The projection operators $P^{\pm}$, defined in the previous subsection, extend to the completion
$\ov{U}_F\subset\advadvaC$. The projection operators $P^{*\pm}$  extend to another
completion $\ov{\ov{U}}_E\subset \advadvaCC$, where  $\advadvaCC$
denotes the minimal extension of $\advadva$ that acts in every left lowest weight and in every
right highest weight $\advadva$-module.

\subsection{Relations between coproducts and the universal $R$-matrix} \label{coprod}
Let $\RRR$ and $\KK$ denote the following formal series of elements of the tensor product $U_E\otimes U_F$:
\pbea \label{pairing}
\RRR &=& \exp\hr{(q-q^{-1})\oint e(z)\otimes f(z)\frac{dz}{z}}= 1+(q-q^{-1})\oint e(z)\otimes f(z)\frac{dz}{z} + {}\\
&&\hspace{2cm}{} + \frac{(q-q^{-1})^2}{2!}\oint\oint e(z_1)e(z_2)\otimes f(z_1)f(z_2)\frac{dz_1}{z_1}\frac{dz_2}{z_2}+\cdots\;, \\
\KK &=& \exp\left(\sum\limits_{n>0}\dfrac{n(q-q^{-1})^2}{(q^n-q^{-n})(q^n+(-1)^{n+1}+q^{-n})}
a_{-n} \otimes a_{n}\right).
\peea
By $\oint f(z){dz}$ we mean a formal integral that is equal to the coefficient of $z^{-1}$ in the formal series $f(z)$. We further set
\beq \label{Rfactors}
\RR_+=\left(P^{*-}\otimes P^+\right)(\RRR), \qquad
\RR_-=\left(P^{*+}\otimes P^-\right)(\RRR), \qquad
\text{and} \qquad
\RR=\RR_+^{21}q^{h\otimes h}\KK^{21}\RR_-,
\eeq
where $h=\log K_0$.
\begin{prop} \label{proposition0}
$(i)$ The tensor $\RR_-^{21}$ is a cocycle for $\D^{(D)}$ such that for any $x\in\advadva$
$$
\D(x)=\RR_-^{21}\D^{(D)}(x) \left(\RR_-^{21}\right)^{-1},
$$
$(ii)$ The tensor $\RR$ is the universal $R$-matrix for $\advadva$ with opposite
comultiplication $\D^{op}$:
$$
\D(x)=\RR\D^{op}(x)\RR^{-1}\quad\;\;\text{for any}\quad x\in\advadva.
$$
\end{prop}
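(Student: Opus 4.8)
The plan is to read both statements through the theory of Drinfeld twists: I would show that $\RR_-^{21}$ is precisely the cocycle converting the Drinfeld coproduct $\D^{(D)}$ into the standard one $\D$, and then deduce (ii) from (i) by the standard transformation of a universal $R$-matrix under a twist. The structural input is the Drinfeld-double description of $\advadva$ adapted from \cite{EKP} and \cite{DKP1}: the current Borel subalgebras $U_E$ and $U_F$ are dual with respect to the Hopf pairing encoded in $\RRR$ and $\KK$, so that the canonical element $\mathcal{R}^{(D)}=\RRR\,q^{h\otimes h}\KK\in\hat U_E\otimes\hat U_F$ is the universal $R$-matrix for $\D^{(D)}$, intertwining $\D^{(D)}$ with $\D^{(D),op}$. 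Establishing this intertwining property for $\mathcal{R}^{(D)}$, together with the orthogonal decompositions $U_E=U_e^+U_E^-$, $U_F=U_f^-U_F^+$ and the coideal properties of Proposition~\rfs{prop3.1}, is the backbone of the whole argument.

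For part (i) I would use that $\D$, $\D^{(D)}$ and conjugation by the fixed element $\RR_-^{21}$ are all algebra homomorphisms, so it suffices to check the identity $\D(x)=\RR_-^{21}\D^{(D)}(x)(\RR_-^{21})^{-1}$ on the Chevalley generators $e_0$, $f_0$, $e_{\d-2\a}$, $e_{-(\d-2\a)}$ and the grouplikes $K_0^{\pm1}$. On the grouplikes the two coproducts already coincide, so the content sits in the root generators, the composite ones $e_{\pm(\d-2\a)}$ being handled through the expressions in \rf{isom}. There the coideal properties of $U_q(\bgt_\pm)$ from Proposition~\rfs{prop3.1} keep $\mathrm{Ad}(\RR_-^{21})\D^{(D)}(x)$ inside $\advadva\otimes\advadva$, while the explicit form $\RR_-=(P^{*+}\otimes P^-)(\RRR)\in U_e^+\otimes U_f^-$ lets me evaluate the conjugation term by term against \rf{DDe}--\rf{DDK}. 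The cocycle equation for $\RR_-^{21}$ then follows by comparing the coassociativity of the resulting $\D$ with that of $\D^{(D)}$.

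For part (ii) I would invoke the general twisting principle: if $F=\RR_-^{21}$ is a twist with $\D=\mathrm{Ad}(F)\D^{(D)}$ and $\mathcal{R}^{(D)}$ is the universal $R$-matrix for $\D^{(D)}$, then the twisted tensor $\RR_-^{21}\,\mathcal{R}^{(D)}\,\RR_-^{-1}$ is a universal $R$-matrix intertwining $\D$ with $\D^{op}$ in the sense of the statement; this I would verify directly, using that the flip of (i) gives $\D^{op}=\mathrm{Ad}(\RR_-)\D^{(D),op}$. It then remains to match this twisted tensor with the factorized expression $\RR=\RR_+^{21}q^{h\otimes h}\KK^{21}\RR_-$ of \rf{Rfactors}. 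For this I would establish the Gauss-type factorization of $\mathcal{R}^{(D)}$ with respect to the two orthogonal decompositions, obtained by applying the projections $P^{\pm}$, $P^{*\pm}$ to $\RRR$: the off-diagonal parts collapse onto $\RR_+$ and $\RR_-$, while the diagonal part reproduces $q^{h\otimes h}\KK$. Reordering the factors and tracking the flips then turns $\RR_-^{21}\,\RRR\,q^{h\otimes h}\KK\,\RR_-^{-1}$ into the claimed product.

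The main obstacle I expect to be analytic rather than algebraic: giving rigorous meaning to the formal integrals defining $\RRR$ and to their projections inside the completions $\advadvaC$ and $\advadvaCC$ of Section~\rfs{subsect_compl}, and proving that these projections yield the closed factors $\RR_\pm$ with the convergence needed to justify the reorderings in the Gauss factorization. This is exactly where the analytic properties of products of Drinfeld currents, and the composite currents developed in Section~$4$, must enter; both the well-definedness of $\mathrm{Ad}(\RR_-^{21})$ and the factorization of $\mathcal{R}^{(D)}$ rest on controlling these operator-valued meromorphic functions. Once the factorization is secured, the cocycle identity and the two intertwining relations reduce to formal consequences of Proposition~\rfs{prop3.1} and coassociativity.
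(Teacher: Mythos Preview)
Your overall strategy---treating $\RR_-^{21}$ as a Drinfeld twist from $\D^{(D)}$ to $\D$ and then deducing (ii) from (i) via the standard transformation of the universal $R$-matrix---is correct and is the same conceptual framework the paper invokes. The difference is in how part~(i) is established.

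You propose to verify $\D(x)=\RR_-^{21}\D^{(D)}(x)(\RR_-^{21})^{-1}$ directly on the Chevalley generators, saying that ``the explicit form $\RR_-=(P^{*+}\otimes P^-)(\RRR)$ lets me evaluate the conjugation term by term''. This is the weak point: $\RR_-$ is at this stage only a projected formal integral, not an explicit element, so there is nothing concrete to conjugate by; obtaining a closed expression for $\RR_-$ is in fact the goal of Sections~5--6. The paper avoids this circularity by arguing structurally. It first invokes \cite{DKP1} to identify $\RRR'=\,{:}\RRR{:}\,\KK$ as the canonical tensor of the Hopf pairing $U_E\otimes U_F^{op}\to\CC$, then uses the orthogonal decompositions $(U_e^+,U_E^-)$ and $(U_f^-,U_F^+)$ (coming from Cartan--Weyl theory and Proposition~\ref{prop3.1}) to factor $\RRR'=\RRR'_1\RRR'_2$ and to identify the factors with $\RR_-$ and $\RR_+\KK$ via the projections. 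With this factorization in hand, both (i) and (ii) are read off from the abstract machinery of \cite[Section~2.3]{EKP} (their conditions (H1)--(H6) and Proposition~3.8), with no generator-by-generator computation. In short, the Gauss-type factorization you reserve for (ii) is, in the paper's argument, already the engine driving (i).

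A smaller correction: the analytic control you anticipate does not come from the composite currents of Section~4; those are introduced solely for the weight-function computation. The well-definedness of $\RRR'$ and its factorization is handled instead by the normal-ordering device $:\,\cdot\,:$ and the completions $\ov U_F$, $\ov{\ov U}_E$ of Section~\ref{subsect_compl}, imported from \cite{DKP1}.
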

\begin{proof}
For any element $X\in U_E\otimes U_F$, let $:\!X\!:$ denote the same element
presented as a
sum $\sum_i a_ib_i\otimes c_id_i$, where $a_i\in U_e^+$, $b_i\in U_E^-$,
$c_i\in U_f^-$, $d_i\in U_F^+$. In the work \cite{DKP1}, it was proved that the element
$\RRR'=\;:\!\RRR\!:\KK$ is a well defined series in elements of the tensor product
$\ov{\ov{U}}_E\otimes {\ov{U}}_F$, which represents the tensor
of the Hopf\footnote{which means that $\langle a_1a_2,b\rangle=\langle
 a_1\otimes a_2, \D^{op}(b)\rangle$ and $\langle a,b_1b_2\rangle=
 \langle \D(a), b_1\otimes b_2\rangle$ for any $a,a_1,a_2\in U_E$ and $b,b_1,b_2\in U_F$.}
 pairing $\ha{\;,\,}\colon U_E\otimes U_F^{op}\to\CC$, that is
$$\langle 1\otimes a,\RRR'\rangle=a,\qquad
\langle b\otimes 1,\RRR'\rangle=b$$
for any $a\in U_E$ and $b\in U_F$.\footnote{In \cite{DKP1} this result was derived
for $U_q(\widehat{{\mathfrak{sl}}}_2)$. However, the arguments therein are general and work
for any quantum affine algebra.}
Due to Proposition \rfs{proposition0} and to the theory of Cartan-Weyl bases
for $\advadva$ developed in \cite{KT},\footnote{To adapt the result of \cite{KT} to our case, one should replace $q$-commutators with $q^{-1}$-commutators in the construction of the Cartan-Weyl basis.}
the pair $(U_e^+,U_E^-)$ forms an orthogonal decomposition of the bialgebra
$U_E$. Analogously, the dual pair $(U_f^-,U_F^+)$ forms an orthogonal decomposition of the bialgebra $U_F$. This implies the decomposition $\RRR'=\RRR'_1\RRR'_2$, where $\RRR'_1$ is the tensor of the pairing of $U_e^+$ and $U_f^-$,
$\RRR'_2$ is the tensor of the pairing of $U_E^-$ and $U_F^+$.
They are given by the relations:
\begin{equation}
 \begin{split}
 \label{Rplus}
 \RRR'_1 &= 1\otimes P^- (\RRR')=P^{*+}\otimes P^- (\RRR') =
 P^{*+}\otimes P^- (\RRR),
 \\
 \RRR'_2 &= 1\otimes P^+ (\RRR')=P^{*-}\otimes P^+ (\RRR')=
 \left(P^{*-}\otimes P^+ (\RRR)\right)\KK.
 \end{split}
\end{equation}
The last equalities in both lines of \rf{Rplus} are implied by the following observation:
the application of projection operators to both tensor components automatically performs the
 normal ordering of the tensor $\RRR$. The factorization of the tensor $\KK$ follows
 from the definitions of the projections.

Now, part (i) of the proposition becomes a direct generalization of Proposition 3.8 of \cite{EKP}.
 One can check that the conditions (H1)--(H6) of Section 2.3 of \cite{EKP}
 are satisfied for the algebras $A_1=U_e^+$, $A_2=U_E^-$,
$B_1=U_F^+$, $B_2=U_f^-$ due to the theory of Cartan-Weyl bases, Proposition~\ref{proposition0}
and the relations \rf{Rplus}. Under the same settings part (ii) is the direct generalization
of the results of \cite[Section 2.3]{EKP}.
\end{proof}

\begin{rem}
Note that all the results of this and of previous sections could be easily modified for the central extended  algebra $\advadva$ with an added grading element. See e.g. \cite{DK}.
\end{rem}

\subsection{The universal weight function}

Let $V$ be a representation of the algebra $\advadva$ and $v$ be a vector in $V$. We call $v$
a highest weight vector with respect to the current Borel subalgebra $U_E$ if
\pbea
e(z)v &=& 0, \\
K^\pm(z)v &=& \la(z)v,
\peea
where $\la(z)$ is a meromorphic function, decomposed in series in $z^{-1}$ for $K^+(z)$
and in series in $z$ for $K^-(z)$. Representation $V$ is called a representation with
highest weight vector $v \in V$ with respect to $U_E$ if it is generated by $v$ over $\advadva$.
Suppose that for every finite ordered set $I=\hc{i_1,\dots,i_n}$, an element
$W(z_{i_1},\dots,z_{i_n})$ is chosen. It is a Laurent polynomial in $\hc{z_{i_1},\dots,z_{i_n}}$ over the ring $\advadva\hs{\hs{z_{i_2}/z_{i_1}, z_{i_3}/z_{i_2},\dots,z_{i_n}/z_{i_{n-1}}}}$ satisfying the following conditions:
\begin{enumerate}
\item[(1)] for any representation $V$ that is a highest weight representation with respect to $U_E$ with highest weight vector $v$ the function
\pbe
w_V(z_{i_1},\dots,z_{i_n})=W(z_{i_1},\dots,z_{i_n})v
\pee
converges in the domain $|z_{i_1}|\gg\dots\gg|z_{i_n}|$ to a meromorphic $V$-valued function;
\item[(2)] $W=1$ if $I=\es$;
\item[(3)] let $V=V_1 \otimes V_2$ be a tensor product of the highest weight representations
with highest weight vectors $v_1$ and $v_2$ and the highest weight series $\hc{\la^{(1)}(z)}$
and $\hc{\la^{(2)}(z)}$; then for an ordered set $I$ we have
\begin{multline}
w_V\hr{\hc{z_{i|i \in I}}} = \suml{I=I_1 \bigsqcup I_2}{}w_{V_1}\hr{\hc{z_{i|i \in I_1}}}
\otimes w_{V_2}\hr{\hc{z_{i|i\in I_2}}} \times \\ \times \prodl{i \in I_1}{}\la^{(2)}(z_i)
\quad\times \prodl{\substack{i < j, \\ i \in I_1, j \in I_2}}{}
\dfrac{(q^{-2}-z_j/z_i)(q+z_j/z_i)}{(1-q^{-2}z_j/z_i)(1+qz_j/z_i)}.
\end{multline}
\end{enumerate}
A collection $W(z_{i_1},\dots,z_{i_n})$ for all $n$ is called the universal weight function.
Using Proposition~\ref{proposition0}, we get the following direct generalization of \cite[Theorem 3]{EKP}:
\begin{prop}\label{proposition000}
The collection $P^+\br{f(z_{i_1}),\dots,f(z_{i_n})}$ is a weight function for $\advadva$:
\beq
W(z_{i_1},\dots,z_{i_n}) = P^+\br{f(z_{i_1}),\dots,f(z_{i_n})}.
\eeq
\end{prop}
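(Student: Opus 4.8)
The plan is to verify that the collection $W(z_{i_1},\dots,z_{i_n}) = P^+\br{f(z_{i_1}),\dots,f(z_{i_n})}$ satisfies the three defining conditions of a universal weight function, with the bulk of the work going into the coalgebra property (3). Conditions (1) and (2) are essentially formal: condition (2) holds because $P^+(1) = \e(1)\cdot 1 = 1$ by the definition \rf{Pdef}, since the empty product of currents is the unit. For condition (1), I would invoke the discussion of Subsection \rfs{subsect_compl}: a product of currents $f(z_1)\cdots f(z_n)$ acting on a highest weight vector is, by the analyticity statements collected there, a Laurent polynomial over the appropriate ring of formal power series that converges to a meromorphic function in the domain $|z_{i_1}|\gg\dots\gg|z_{i_n}|$, and the projection $P^+$ extends to the completion $\ov U_F$, so $P^+\br{f(z_{i_1}),\dots,f(z_{i_n})}$ inherits these analytic properties.

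The heart of the matter is condition (3), the factorization over ordered partitions $I = I_1 \sqcup I_2$ when $V = V_1 \otimes V_2$. The key tool is Proposition \rfs{proposition0}(i), which says that the standard and Drinfeld comultiplications are related by conjugation by the cocycle $\RR_-^{21}$, namely $\D(x) = \RR_-^{21}\,\D^{(D)}(x)\,(\RR_-^{21})^{-1}$. First I would compute the Drinfeld coproduct of a product of $f$-currents. Iterating \rf{DDf}, $\D^{(D)}(f(z)) = 1\otimes f(z) + f(z)\otimes K^+(z)$, one gets an expansion of $\D^{(D)}\br{f(z_{i_1})\cdots f(z_{i_n})}$ as a sum over all ways of splitting the ordered set $I$ into a ``left'' block and a ``right'' block, where each right-block current $f(z_i)$ that is pushed past the left block contributes a factor $K^+$ — and crucially, each current going to the left tensor slot carries the $K^+(z_i)$ factors of the currents to its right. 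On a highest weight vector $v_2$ with weight series $\la^{(2)}(z)$, these $K^+(z_i)$ collapse to scalars $\la^{(2)}(z_i)$, which is exactly the source of the product $\prod_{i\in I_1}\la^{(2)}(z_i)$ appearing in the claimed formula.

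Next I would apply the coideal and multiplicativity properties of the projection $P^+$ to the conjugated coproduct. Because $U_F^+$ is a coideal with $\D^{(D)}(U_F^+)\subs U_F\otimes U_F^+$, and because $P^+$ is multiplicative in the sense dictated by the orthogonal decomposition $U_F = U_f^- U_F^+$, applying $P^+\otimes P^+$ to $\D^{(D)}\br{f(z_{i_1})\cdots f(z_{i_n})}$ factors the composite current projection into a tensor product of lower weight functions $W(\hc{z_i}_{i\in I_1})\otimes W(\hc{z_i}_{i\in I_2})$. The passage from $\D^{(D)}$ to the physically relevant $\D$ via the cocycle $\RR_-^{21}$ is what produces the remaining rational factor $\prod_{i<j,\,i\in I_1,\,j\in I_2}\frac{(q^{-2}-z_j/z_i)(q+z_j/z_i)}{(1-q^{-2}z_j/z_i)(1+qz_j/z_i)}$: this factor is precisely the matrix coefficient of $\RR_-$ (equivalently, the pairing tensor built from $\RRR$ via \rf{Rfactors}) between the highest weight vectors, evaluated on the cross-terms where $i\in I_1$ and $j\in I_2$ with $i<j$. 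I would identify this factor with the function $\a(z_j/z_i)$ from \rf{a} up to the reindexing forced by the principal grading.

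The main obstacle I anticipate is the careful bookkeeping in this last step: one must track how the normal-ordering of the product of currents (the passage from the naive $\D^{(D)}$ expansion to the ordered form $:\!\RRR\!:$ discussed in the proof of Proposition \rfs{proposition0}) interacts with the cocycle twist, so that the scalar $\la^{(2)}$-factors and the rational $\a$-factors separate cleanly and land on the correct pairs of indices. In particular, verifying that the cross-term contributions assemble into exactly the product over $\{i<j,\ i\in I_1,\ j\in I_2\}$, with the precise form of the rational function matching \rf{a}, requires controlling the ordering conventions and the meromorphic continuation dictated by the commutation relation \rf{ff}. Since Proposition \rfs{proposition0} already supplies the cocycle relation and the orthogonal decomposition structure, and since the argument is formally parallel to \cite[Theorem 3]{EKP}, the verification reduces to confirming that the $\advadva$-specific function $\a$ and the twisted Serre-type relations do not disrupt the general mechanism — which is why the statement is presented as a direct generalization of the $\slg_2$ case.
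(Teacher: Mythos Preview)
Your overall strategy coincides with the paper's: it simply invokes Proposition~\ref{proposition0} and declares the result a direct generalization of \cite[Theorem~3]{EKP}. Your unpacking of that argument is largely right, but one misattribution would derail the bookkeeping you worry about at the end.

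The rational cross-factor does \emph{not} come from the cocycle $\RR_-^{21}$. Since $\RR_-=(P^{*+}\otimes P^-)(\RRR)\in U_e^+\otimes U_f^-$, the second leg of $\RR_-^{21}$ lies in $U_e^+$, so $(\RR_-^{21})^{-1}$ acts as the identity on $v_1\otimes v_2$ (the highest weight condition $e(z)v_2=0$ kills all but the constant term). The cocycle's role is only to replace the $\D$-action by the $\D^{(D)}$-action; it contributes no scalar. The rational factor has the \emph{same} origin as the $\la^{(2)}$-factors: in the expansion of $\D^{(D)}\bigl(\prod_k f(z_k)\bigr)$, a summand with $I_1$ in the first slot carries, in the second slot, the factors $K^+(z_i)$ for $i\in I_1$ interleaved with the currents $f(z_j)$ for $j\in I_2$. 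Pushing each $K^+(z_i)$ rightward past $f(z_j)$ with $j>i$ yields, by \rf{Kpf}, a factor $\a(z_j/z_i)^{-1}$, and only then can $K^+(z_i)$ reach $v_2$ and collapse to $\la^{(2)}(z_i)$. A direct check gives $\a(x)^{-1}=\frac{(q^{-2}-x)(q+x)}{(1-q^{-2}x)(1+qx)}$, which is exactly the factor in condition~(3) (so it is $\a^{-1}$, not $\a$ as you wrote). Once you track the $\la^{(2)}$-product and the rational product together as both arising from these $K^+$ manipulations inside the Drinfeld coproduct, the ``main obstacle'' you anticipate disappears.
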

 Our main goal from now on is to calculate the latter projection.

\section{Composite currents}

In the following sections we use the abbreviated notation $P$ for the projection $P^+$.

\subsection{Definitions} \label{cc}

We introduce a pair of new currents $s(z)$ and $t(z).$ The first one plays
a crucial role in the following discussion since the main result is formulated
in terms of the projection $P(s(z)).$ Thus $s(z)$ and $t(z)$ are a pair
of Laurent series with coefficients in the completed algebra $\ov U_F$ given by
\bea \label{s}
s(z) &=& \res\limits_{w=-q^{-1} z}f(z)f(w)\frac{dw}w, \\
t(z) &=& \res\limits_{w=q^2 z}f(z)f(w)\frac{dw}w.
\eea
We call generating functions $s(z)$, $t(z)$ the \emph{composite currents}. We treat them as meromorphic operator-valued functions in the category of left highest weight $\advadva$-modules.
\begin{prop}
In $\ov U_F$ the composite currents $s(z)$ and $t(z)$ can be represented as the following products:
\bea
\label{s}
s(z) &=& \frac{(q+q^2)(q^{-1}-q)}{1+q^3}f(-q^{-1}z)f(z), \\
\label{t}
t(z) &=& \frac{(q+q^2)(q^2-q^{-2})}{1+q^3}f(q^2z)f(z).
\eea
\end{prop}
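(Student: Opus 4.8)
The plan is to read both composite currents straight off the quadratic relation \rf{ff} together with a single residue computation, using the interpretation of products of currents as operator-valued meromorphic functions from Section~\rfs{subsect_compl}. Solving \rf{ff} for $f(z)f(w)$ gives
\beq
f(z)f(w)=\frac{(z-q^2w)(qz+w)}{(q^2z-w)(z+qw)}\,f(w)f(z),
\eeq
so that, for fixed $z$ and generic $q$, the meromorphic continuation of $f(z)f(w)$ has simple poles in $w$ precisely at the zeros $w=q^2z$ and $w=-q^{-1}z$ of the denominator. These are exactly the points at which $t(z)$ and $s(z)$ are defined to take residues, which is what makes the computation possible.

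Before computing I would check that the operator factor $f(w)f(z)$ on the right is holomorphic at both of these points, so that the residue is governed entirely by the rational prefactor. Interchanging $z$ and $w$ in \rf{ff} expresses $f(w)f(z)$ as $f(z)f(w)$ times a prefactor with denominator $(q^2w-z)(w+qz)$; its poles therefore sit at $w=q^{-2}z$ and $w=-qz$, which for generic $q$ are distinct from $q^2z$ and $-q^{-1}z$. Hence near each point of interest $f(w)f(z)$ contributes only its value.

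With this in hand the computation is routine. Writing $\Phi(w)=f(z)f(w)$ and using the simple-pole formula
\beq
\res\limits_{w=w_0}\Phi(w)\frac{dw}{w}=\frac{1}{w_0}\lim_{w\to w_0}(w-w_0)\Phi(w),
\eeq
I would take $w_0=-q^{-1}z$ for $s(z)$, where the factor $z+qw=q(w+q^{-1}z)$ supplies the pole, and $w_0=q^2z$ for $t(z)$, where the factor $q^2z-w$ supplies it. In each case $(w-w_0)$ cancels the vanishing factor, and one is left with the surviving rational factors evaluated at $w_0$, times $1/w_0$, times $f(w_0)f(z)$. After a short cancellation the two scalar coefficients collapse to $\frac{(q+q^2)(q^{-1}-q)}{1+q^3}$ and $\frac{(q+q^2)(q^2-q^{-2})}{1+q^3}$, multiplying $f(-q^{-1}z)f(z)$ and $f(q^2z)f(z)$ respectively, which is the claim.

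The genuine content is not the algebra but the analytic bookkeeping: one must know that $f(z)f(w)$ extends to a meromorphic function whose only singularities near $w=q^2z,-q^{-1}z$ are the simple poles forced by \rf{ff}, so that the formal residues in the definition of $s(z),t(z)$ coincide with honest analytic residues. This is precisely what Section~\rfs{subsect_compl} guarantees, since on a highest-weight module the matrix coefficients of $f(z)f(w)$ are Laurent polynomials over convergent power series in $w/z$ that sum, in $|z|\gg|w|$, to the above rational multiple of $f(w)f(z)$. I expect this justification, together with checking that the two poles are simple and disjoint for generic $q$, to be the only delicate point.
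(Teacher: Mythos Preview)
Your proof is correct and follows essentially the same route as the paper: rewrite $f(z)f(w)$ via relation \rf{ff} as a rational prefactor times $f(w)f(z)$, then compute the residue as the limit $\lim_{w\to w_0}(w-w_0)f(z)f(w)/w$, with the simple-pole factor cancelling and the remaining rational function evaluated at $w_0$. The paper's version is terser---it writes the limit and evaluates in one line---while you spell out more carefully why $f(w)f(z)$ is regular at $w_0$ and where the analytic interpretation comes from, but the argument is the same.
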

\begin{proof}
\begin{multline*}
\res\limits_{w=-q^{-1} z}f(z)f(w)\frac{dw}w = \lim\limits_{w\to -q^{-1}z}
\hr{(w+q^{-1}z)f(z)f(w)\frac1w}
= \\ = \lim\limits_{w\to -q^{-1}z}\hr{\frac{(z-q^2w)(qz+w)}{(q^2z-w)w}f(w)f(z)} =
\frac{(q+q^2)(q^{-1}-q)}{1+q^3}f(-q^{-1}z)f(z).
\end{multline*}
The second equality is proved in a similar way.
\end{proof}

\subsection{Analytic properties}

We use the following vanishing conditions on products of Drinfeld currents: \emph{diagonal conditions}~(i) follow from relation \rf{ff} and \emph{Serre conditions}~(ii) follow from analytic Serre relations~\rf{Serre} (see \cite{DK}):
\begin{itemize}
\item[(i)] the product $f(z)f(w)$ has a simple zero on the hyperplane $z=w$;
\item[(ii)] the product $f(z_1)f(z_2)f(z_3)$ has a simple zero on the following lines:
$z_1=-qz_3=q^2z_2$, $z_2=-qz_1=q^2z_3\;$ and $\;z_3=-qz_2=q^2z_1$.
\end{itemize}
We collect data derived from the vanishing conditions in the following table:
\begin{center}
\begin{tabular}{|c|c||c|c|}
\hline
\multicolumn{2}{|c||}{$s(z)f(w)$} & \multicolumn{2}{|c|}{$f(w)s(z)$} \\
\hline
zeros & poles & zeros & poles\\
\hline
$w=z$ & $w=q^2z$ & $w=z$ & $w=z$\\
$w=-q^{-1}z$ & $w=-q^{-1}z$ & $w=-q^{-1}z$ & $w=-q^{-3}z$\\
$w=-qz$ & $w=-qz$ & $w=-qz$ & $w=-qz$\\
$w=q^{-2}z$ & $w=q^{-2}z$ & $w=q^{-2}z$ & $w=q^{-2}z$\\
\hline
\end{tabular}
\end{center}
where the first pair of zeros is a consequence of the diagonal conditions while the second pair is obtained from the Serre relations. Cancelling zeros and poles at the same points we derive that $s(z)f(w)$ has just one simple zero surviving on $w=z$ and just one simple pole on $w=q^2z$. Analogously, $f(w)s(z)$ has a simple zero on $w=-q^{-1}z$ and a simple pole on $w=-q^{-3}z$. Therefore the following equality of holomorphic functions holds:
\be \label{sf}
\dfrac{(q^2z-w)}{(z-w)}s(z)f(w)=\dfrac{(z+q^3w)}{(z+qw)}f(w)s(z).
\ee

In a similar way, we obtain one more table:
\begin{center}
\begin{tabular}{|c|c||c|c|}
\hline
\multicolumn{2}{|c||}{$s(z)s(w)$} & \multicolumn{2}{|c|}{$s(w)s(z)$} \\
\hline
zeros & poles & zeros & poles\\
\hline
$w=z$        & $w=q^2z$     & $w=z$        & $w=q^{-2}z$\\
$w=-qz$      & $w=-q^3z$    & $w=-q^{-1}z$ & $w=-q^{-3}z$\\
$w=z$        & $w=z$        & $w=z$        & $w=z$\\
$w=-q^{-1}z$ & $w=-q^{-1}z$ & $w=-qz$      & $w=-qz$\\
$w=q^2z$     & $w=q^2z$     & $w=q^{-2}z$  & $w=q^{-2}z$\\
$w=-q^{-1}z$ & $w=-q^{-1}z$ & $w=-qz$      & $w=-qz$\\
$w=-qz$      & $w=-qz$      & $w=-q^{-1}z$ & $w=-q^{-1}z$\\
$w=q^{-2}z$  & $w=q^{-2}z$  & $w=q^2z$     & $w=q^2z$\\
\hline
\end{tabular}
\end{center}
In this case the first four zeros are derived from diagonal conditions and the rest from Serre relations.
Thus the product $s(z)s(w)$ has simple zeros at the points $w=z$ and $w=-qz$ and simple poles at the points $w=q^2z$ and $w=-q^3z$. Once again we obtain the following equality of holomorphic functions:
\be \label{ss}
\dfrac{(q^2z-w)(q^3z+w)}{(z-w)(qz+w)}s(z)s(w) = \dfrac{(z-q^2w)(z+q^3w)}{(z-w)(z+qw)}s(w)s(z).
\ee

\section{Calculation of the weight function}

\subsection{Required notation}

In this section we introduce some notation that is frequently used below.
The following rational functions are formal power series, converging in the domain $|z_1|\gg\dots\gg|z_n|$.
\begin{align}
\label{rho}
\rho_k(z_1,\dots,z_{n-1};z_n) &=
\prodl{\substack{i=1 \\ i\ne k}}{n-1}\dfrac{z_n-z_i}{z_k-z_i}\prodl{i=1}{n-1}\dfrac{z_k-q^2z_i}{z_n-q^2z_i},
\\ \label{la}
\la_k(z_1,\dots,z_{n-1};z_n) &=
\dfrac{z_k}{qz_n+z_k}\prodl{i=1}{n-1}\dfrac{(z_n-z_i)(z_k+q^3z_i)}{(z_k+qz_i)(z_n-q^2z_i)},
\\ \label{mu}
\mu_k(z_1,\dots,z_{n-1};z_n) &=
\prodl{\substack{i=1 \\ i\ne k}}{n-1}\dfrac{z_n-z_i}{z_k-z_i}
\prodl{i=1}{n-1}\dfrac{(z_n+qz_i)(z_k-q^2z_i)(z_k+q^3z_i)}{(z_k+qz_i)(z_n-q^2z_i)(z_n+q^3z_i)},
\\ \label{nu}
\nu_k(z_1,\dots,z_{n-1};z_n) &=
-q^n\prodl{\substack{i=1 \\ i\ne k}}{n-1}\dfrac{z_n+qz_i}{z_k-z_i}
\prodl{i=1}{n-1}\dfrac{(z_n-z_i)(z_k+qz_i)(z_k-q^2z_i)}{(qz_k+z_i)(z_n-q^2z_i)(z_n+q^3z_i)}.
\end{align}
Let $\Fc(z_1,\dots,z_{n-1};z_n)$ and $\Sc(z_1,\dots,z_{n-1};z_n)$ denote the following combinations of projections:
\beq \label{Fc}
\Fc(z_1,\dots,z_{n-1};z_n)=P(f(z_n))-\suml{k=1}{n-1}\rho_k(z_1,\dots,z_{n-1};z_n)P(f(z_k)),
\eeq
\beq \label{Sc}
\begin{split}
\Sc(z_1,\dots,z_{n-1};z_n)=P(s(z_n)) &- \suml{k=1}{n-1}\mu_k(z_1,\dots,z_{n-1};z_n)P(s(z_k)) - {}\\
{} &- \suml{k=1}{n-1}\nu_k(z_1,\dots,z_{n-1};z_n)P(s(-qz_k)).
\end{split}
\eeq
Let $I=\hc{i_1,\dots,i_r}$ and $J=\hc{j_1,\dots,j_r}$ be ordered subsets of an ordered set $\hc{1,\dots,n}$. We call an ordered pair of subsets $\hc{I,J}$ a \emph{$P^+$-admissible pair} of cardinality $r$ if the following hold:
\begin{itemize}
\item $I \cap J = \es$;
\item $j_1>j_2>\dots>j_r$;
\item $j_\ell>i_\ell$, for $\ell=1,\dots,r$.
\end{itemize}
For each $P^+$-admissible pair $\hc{I,J}$ of cardinality $r$, and for any $k=1,\dots,r$, we introduce a formal power series $\tau_{I,J}^k\hr{z_1,\dots,z_n}$ that is the decomposition in the domain $|z_1|\gg|z_2|\gg\dots\gg|z_n|$ of the rational function
\beq
\label{tau-fin}
\tau_{I,J}^k\hr{z_1,\dots,z_n} =
-\la_{i_k}\hr{z_{i_1},\dots,z_{i_{k-1}},z_1,\dots,z_{j_{k}-1};z_{j_k}}
\prodl{\substack{\ell=1\\ \ell\ne i_1,\dots,i_{k-1}}}{i_k-1}\a\hr{\dfrac{z_\ell}{z_{i_k}}}
\prodl{\substack{\ell=1\\ \ell\ne i_1,\dots,i_k}}{j_k-1}\a\hr{\dfrac{-qz_\ell}{z_{i_k}}},
\eeq
where $z_{i_1},\dots,z_{i_{k-1}}$ are skipped in the row $z_1,\dots,z_{j_k-1}$ of variables of the function $\la_{i_k}$ above. In other words, each of $z_{i_1},\dots,z_{i_{k-1}}$ occur in the variables of function $\la_{i_k}$, but only once.

Now, for each $P^+$-admissible pair $\hc{I,J}$ of cardinality $r$ and for any $k=1,\dots,n$ such that $k\notin I\cup J$, we introduce a generating function $\Fc_{I,J}^k(z_1,\dots,z_n)$. Namely, for such a $k$ there exists a unique $p=1,\dots,r+1$ for which $j_p<k<j_{p-1}$, with $j_0=n+1$, $j_{r+1}=0$. Then we set
\beq \label{Fc-fin}
\Fc_{I,J}^k(z_1,\dots,z_n)=\Fc\hr{z_{i_1},\dots,z_{i_{p-1}},z_1,\dots,z_{k-1};z_k},
\eeq
where again $z_{i_1},\dots,z_{i_{p-1}}$ are skipped in the row $z_1,\dots,z_{k-1}$.

Finally, we define a q-commutator
\footnote{please note that the definition differs from the one given in \cite{KT}.}
\beq \label{qcomm}
\hs{a,b}_{q^{\pm1}} = ab-q^{\pm1}ba
\eeq
and a pair of ordered products
$$
\prodl{k=1}{\substack{r \\ \longra}}G_k = G_1G_2\dots G_r,
\qquad
\prodl{k=1}{\substack{r \\ \longla}}G_k = G_r\dots G_2G_1.
$$

\subsection{Main results}

\begin{theor} \label{Th1}
Projections of currents $f(z)$ and $s(z)$ can be written as follows:
\bea
P(f(z)) &=& \sum\limits_{n>0}f_nz^{-n}, \\
P(s(z)) &=& -\dfrac1{q+q^{-2}}\hr{q\hs{P(f(z)),f_0}_{q^{-1}} + \hs{f_1z^{-1},f_0+P(f(z))}_{q^{-1}}}.
\eea
\end{theor}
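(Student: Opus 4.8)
The plan is to handle the two formulae in turn: the first follows from the coideal/Cartan--Weyl structure, and the second from a half-current reordering based on it. For $P(f(z))$ I would compute the projection mode by mode. By the theory of Cartan--Weyl bases underlying condition (i) of the orthogonal decomposition $U_F=U_f^-U_F^+$, the modes $f_n$ with $n\le 0$ lie in $U_f^-$: the mode $f_0$ is the image of the Chevalley generator $e_{-\a}\in U_q(\mathfrak b_-)$, and the principal grading $\deg f_n=3n-1\le -1$ places the remaining non-positive modes in the lower Borel subalgebra, while the modes $f_n$ with $n\ge 1$ (of positive degree) lie in $U_F^+$. The defining relations \rf{Pdef} of $P=P^+$, together with $\e(f_n)=0$, then give $P(f_n)=0$ for $n\le 0$ and $P(f_n)=f_n$ for $n\ge 1$, so that $P(f(z))=\sum_{n>0}f_nz^{-n}$.

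For $P(s(z))$ I would start from the product representation $s(z)=\frac{(q+q^2)(q^{-1}-q)}{1+q^3}\,f(-q^{-1}z)f(z)$ proved above, so that by linearity $P(s(z))=\frac{(q+q^2)(q^{-1}-q)}{1+q^3}\,P\!\left(f(-q^{-1}z)f(z)\right)$. Writing each current as a sum of half-currents $f(z)=f^-(z)+f^+(z)$, with $f^-(z)=\sum_{n\le 0}f_nz^{-n}\in\ov U_f^-$ and $f^+(z)=P(f(z))\in\ov{U}_F^+$ by the first part, I expand the product into four terms. Three of them are immediate from \rf{Pdef} and the completed coideal inclusions: the terms $f^-(-q^{-1}z)f^-(z)$ and $f^-(-q^{-1}z)f^+(z)$ carry a left factor in $U_f^-$ and are annihilated because $\e(f^-)=0$, whereas $f^+(-q^{-1}z)f^+(z)$ already lies in $\ov{U}_F^+$ and is fixed by $P$. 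This leaves
\[
P\!\left(f(-q^{-1}z)f(z)\right)=f^+(-q^{-1}z)f^+(z)+P\!\left(f^+(-q^{-1}z)f^-(z)\right).
\]

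The remaining, anti-ordered term $P\!\left(f^+(-q^{-1}z)f^-(z)\right)$ is the heart of the argument. Here I would commute the positive half-current past the negative one using the quadratic exchange relation \rf{ff}, which for $f(u)f(z)$ carries the factor $\frac{(u-q^2z)(qu+z)}{(q^2u-z)(u+qz)}$; beyond the normally ordered piece (which $P$ kills) the reordering produces a contraction supported on the boundary modes. The specialization $u=-q^{-1}z$ is exactly the value at which the factor $(qu+z)$ vanishes --- this is the degeneration that defines $s(z)$ --- and it collapses the reordered expression onto those boundary terms: the evaluation $f^+(-q^{-1}z)=\sum_{m\ge 1}f_m(-q)^mz^{-m}$ supplies the explicit mode $f_1$ through its $m=1$ term $-qf_1z^{-1}$, while the negative half-current contributes only its top mode $f_0$. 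Collecting all contributions, recognizing the combinations $f^+(z)f_0-q^{-1}f_0f^+(z)$, $f_1z^{-1}f_0-q^{-1}f_0f_1z^{-1}$ and $f_1z^{-1}f^+(z)-q^{-1}f^+(z)f_1z^{-1}$ as the $q^{-1}$-commutators of \rf{qcomm}, and simplifying the scalar prefactor (which reduces to $-1/(q+q^{-2})$) should yield the stated formula.

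I expect the reordering $P\!\left(f^+(-q^{-1}z)f^-(z)\right)$ to be the main obstacle: isolating the projection requires carefully tracking which mode contractions survive when the two half-currents are exchanged through the three-term relation \rf{ff}, and justifying the attendant pole manipulations inside the completion $\ov U_F$. A useful consistency check that fixes the normalization of the contraction is the identity $P^+(f_1f_0)=f_1f_0-q^{-1}f_0f_1$: indeed $qf_1f_0-f_0f_1$ is, up to a scalar and a power of $K_0$, the image of $e_{\d-2\a}$ and hence lies in $U_F^+$, while $f_0f_1$ is already normally ordered so that $P^+(f_0f_1)=0$, forcing $P^+(f_1f_0)=[f_1,f_0]_{q^{-1}}$.
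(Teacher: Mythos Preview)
Your argument for $P(f(z))$ is fine and matches the paper. The difficulty is entirely in the second formula, and there your half--current plan has a real gap.

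After splitting $f(-q^{-1}z)f(z)$ into the four half--current pieces you are left with
\[
P(s(z))=c\,f^+(-q^{-1}z)f^+(z)+c\,P\!\left(f^+(-q^{-1}z)f^-(z)\right),\qquad c=\tfrac{(q+q^2)(q^{-1}-q)}{1+q^3},
\]
and you claim that reordering the cross term via \rf{ff} ``collapses the expression onto boundary terms'' $f_0$, $f_1$. But $f^+(-q^{-1}z)f^+(z)=\sum_{m,n\ge 1}(-q)^m f_mf_n z^{-m-n}$ is already in $U_F^+$ and is fixed by $P$: it is a genuinely infinite bilinear in \emph{all} positive modes, and no reordering of the second term alone can cancel it down to an expression in $f_0$, $f_1z^{-1}$ and $P(f(z))$. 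The half--current exchange derived from \rf{ff} (cf.\ Proposition~\ref{cP}) produces, in addition to a normally ordered piece, terms of the shape $f^+(\text{shifted }z)\cdot f(z)$, not the single modes $f_0$, $f_1$; the specialization $u=-q^{-1}z$ does not change this. So the ``contraction supported on the boundary modes'' you invoke is not there, and the mechanism you sketch cannot produce the stated answer.

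What is missing is an identity for $s(z)$ itself before you project. The paper obtains it by a contour argument: multiply the residue defining $s(z)$ by the regularizing factor $(q^2z-w)$ and use that the only poles of $(q^2z-w)f(z)f(w)\tfrac{dw}{w}$ in $w$ are at $0$ and $\infty$, so
\[
\Res_{w=-q^{-1}z}=-\Res_{w=0}-\Res_{w=\infty}.
\]
Evaluating the two easy residues (the one at $\infty$ via the analytic continuation dictated by \rf{ff}) yields the decomposition
\[
s(z)=-\tfrac1{q^2+q^{-1}}\!\left(q^2[f(z),f_0]_{q^{-1}}+q[f_1z^{-1},f(z)]_{q^{-1}}+(q^2{+}q{-}1{-}q^{-1})P^-\!\bigl(f(-q^{-1}z)\bigr)f(z)\right),
\]
in which the ``bad'' piece is already normally ordered and is killed by $P$. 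Only after this step does one need the $U_F^+$--membership facts $qf_mf_0-f_0f_m\in U_F^+$ for $m>0$ and $qf_1f_n-f_nf_1\in U_F^+$ for $n\ge 0$ (the case $m=1$, $n=0$ coming from \rf{isom}, the rest by induction on \rf{ffcomp}); these are exactly what your final consistency check $P^+(f_1f_0)=[f_1,f_0]_{q^{-1}}$ is the first instance of, and they convert $P$ of the two $q$-commutators into the stated formula.
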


\begin{theor} \label{Th-main}
The projection $P(f(z_1)\cdots f(z_n))$ can be expressed by the following explicit formula:
\begin{multline} \label{ans}
P\hr{f(z_1)\cdots f(z_n)} =
\suml{r=0}{\hs{n/2}}\suml{\substack{\hc{I,J}\\ \hm{I}=\hm{J}=r}}{}
\prodl{m=1}{r}\tau_{I,J}^m\hr{z_1,\dots,z_n}
 \times {} \\ {} \times
\prodl{k=1}{\substack{r \\ \longra}}\Sc\hr{z_{i_1},\dots,z_{i_{k-1}};z_{i_k}}
\prodl{\substack{\ell=1 \\ \ell\notin I,J}}{\substack{n \\ \longra}}\Fc_{I,J}^\ell\hr{z_1,\dots,z_n},
\end{multline}
where $\Sc(z_1,\dots,z_{n-1};z_n)$, $\tau_{I,J}^k\hr{z_1,\dots,z_n}$, $\Fc_{I,J}^k\hr{z_1,\dots,z_n}$ are defined by \rf{Sc}, \rf{tau-fin}, \rf{Fc-fin} respectively. The second sum in the formula \rf{ans} is taken over $P^+$-admissible pairs $\hc{I,J}$ of cardinality $r$, and in the case $I=J=\es$ we have only one summand
$$
\prodl{\ell=1}{\substack{n \\ \longra}}\Fc\hr{z_1,\dots,z_{\ell-1};z_\ell}.
$$
\end{theor}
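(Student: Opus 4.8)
The plan is to prove formula \rf{ans} by induction on $n$, using the coideal property of the projection $P=P^+$ together with the coproduct structure $\D^{(D)}$. The base case $n=1$ is immediate from the first line of Theorem \rfs{Th1}, which gives $P(f(z))=\sum_{n>0}f_nz^{-n}$, corresponding to the single term with $r=0$ and $I=J=\es$. For the inductive step the key identity is the standard recursion for ordered projections: since $U_F^+$ is a left coideal and $U_f^-$ is a right coideal with respect to $\D^{(D)}$, one has
\beq
P\hr{f(z_1)\cdots f(z_n)} = P\hr{f(z_1)\cdots f(z_{n-1})\,P(f(z_n))}
+ \text{(correction terms)},
\eeq
where the correction terms arise from commuting the ``negative part'' $f(z_n)-P(f(z_n))\in U_f^-$ to the left through $f(z_1),\dots,f(z_{n-1})$ using the analytic commutation relation \rf{ff}. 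Each such commutation produces a factor $\a(z_\ell/z_n)$ from \rf{a}, which is precisely why those factors appear in $\rho_k$, in $\tau_{I,J}^k$, and in the definition \rf{Fc} of $\Fc$. First I would establish this single-variable extraction lemma carefully, keeping track of whether $f(z_n)$ is absorbed into an $\Fc$-factor (the case $n\notin I\cup J$) or whether it pairs with an earlier variable to form an $s$-current contribution via \rf{s} (the case where $n$ becomes some $j_k$).

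The heart of the argument is to show that applying this extraction to the right-hand side of \rf{ans} for $n-1$ variables reproduces the right-hand side for $n$ variables. When $f(z_n)$ is extracted and does not pair, it contributes a new $\Fc_{I,J}^n$ factor at the right end of the ordered product $\prod^{\longra}\Fc_{I,J}^\ell$, and the admissible pair $\hc{I,J}$ is unchanged; here I must check that the auxiliary-variable bookkeeping in \rf{Fc-fin} (the index $p$ with $j_p<k<j_{p-1}$, and the skipping of $z_{i_1},\dots,z_{i_{p-1}}$) is consistent, and that the linear combination defining $\Fc$ in \rf{Fc} correctly encodes the subtraction of lower $P(f(z_k))$ terms with coefficients $\rho_k$. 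When $f(z_n)$ pairs with some $f(z_m)$ to produce a composite current, the index $n$ enters $J$ as a new largest element $j_k$, the partner index $m$ enters $I$ as $i_k$, and the residue computation \rf{s} together with the commutation relation \rf{sf} for $s(z)f(w)$ generates the $\Sc$-factor $\Sc(z_{i_1},\dots,z_{i_{k-1}};z_{i_k})$ and the coefficient $\tau_{I,J}^k$ given by \rf{tau-fin}. The factor $-\la_{i_k}$ in \rf{tau-fin} and the two products of $\a$-factors must be matched against the residue \rf{s}, the $s$-$s$ exchange relation \rf{ss}, and the accumulated $\a$-factors from moving $f(z_n)$ leftward past the variables with indices $\ell<i_k$ and $i_k<\ell<n$ respectively.

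I expect the main obstacle to be the combinatorial bookkeeping of the admissible pairs and the precise matching of rational coefficients. Specifically, verifying that the decomposition of projections into the $\Fc$ and $\Sc$ building blocks is compatible with the recursion requires an identity among the functions $\rho_k$, $\la_k$, $\mu_k$, $\nu_k$ defined in \rf{rho}--\rf{nu}: when a pairing occurs at the newly inserted variable, the coefficients $\mu_k$ and $\nu_k$ inside $\Sc$ (see \rf{Sc}) must be produced by collecting $\a$-factors and applying relation \rf{sf}, and one must show that the $P(s(-qz_k))$ terms in \rf{Sc} arise from the second pole structure of $s(z)f(w)$ exhibited in the analytic-properties table. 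A clean way to organize this is to first prove Theorem \rfs{Th1} (which gives the $n=2$ content of the pairing) and then argue that the general $\Sc$ and $\Fc$ factors are the unique combinations of projections whose $s(z)f(w)$ and $f(z)f(w)$ commutation properties are compatible with \rf{sf} and \rf{ss}; the difficulty is purely in confirming that the ordered products $\prod^{\longra}$ assemble in the correct order without spurious cross-terms, which I would handle by tracking the principal grading \rf{grading} and the analytic pole structure to rule out extra contributions.
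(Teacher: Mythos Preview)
Your high-level strategy --- split off $f(z_n)=P(f(z_n))-f^-(z_n)$ and push $f^-(z_n)$ leftward --- is exactly the starting move in the paper's proof, but from there the paper proceeds quite differently, and your plan is missing the two ingredients that make the computation close.

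First, the paper does not track the terms produced by successive commutations and then match coefficients. Instead it observes that after pushing $f^-(z_n)$ all the way left and applying $P$, the result as a function of $z_n$ can only have simple poles at $z_n=q^2z_i$ and $z_n=-q^{-1}z_i$; writing the residues as unknown vectors $X_i,Y_i$, one then uses the \emph{diagonal vanishing} $f(z_j)f(z_j)=0$ to get $n-1$ linear equations. Inverting the resulting Cauchy-type matrix $\Mmat_{q^2}$ produces the functions $\rho_k$ and $\la_k$ directly (this is Propositions \rfs{cP} and \rfs{VM}); the $Y_k$ are then identified with the $s(z_k)$ contributions via a single residue. Your proposal to derive $\rho_k,\la_k,\mu_k,\nu_k$ by ``collecting $\a$-factors'' would force you to keep track of a tree of terms from repeated use of the full commutation relation $f(z)f^-(w)=\a(z/w)f^-(w)f(z)+\a_{q^2}(w/z)f^+(q^2z)f(z)+\a_{-q^{-1}}(w/z)f^+(-q^{-1}z)f(z)$, and there is no indication that this collapses neatly; the paper sidesteps this entirely with the linear-system trick.

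Second, the $\Sc$-factors are not produced in the step where $f(z_n)$ pairs with some $f(z_m)$. One application of the recursion (Lemma \rfs{string}) converts $P(s(z_1)\cdots s(z_p)f(z_{p+1})\cdots f(z_n))$ into an $\Fc$-term plus terms with one more $s$ and one fewer $f$; iterating reduces everything to projections $P(s(z_{i_1})\cdots s(z_{i_r}))$ of pure $s$-strings. Only then is a \emph{separate} factorization lemma (Lemma \rfs{stringS}) invoked, using the decomposition of $s(z)$ from Proposition \rfs{decomp}, the $s\,s^-$ commutation of Proposition \rfs{cP2}, and a $2(n-1)\times 2(n-1)$ linear system built from the vanishing of $s(z_1)\cdots s(z_n)$ on both $z_n=z_i$ and $z_n=-qz_i$, to peel off the $\Sc$-factors. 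Your single-step picture, in which the pairing of $f(z_n)$ with $f(z_m)$ already yields an $\Sc$-factor with its $\mu_k,\nu_k$ coefficients, conflates these two stages and would not reproduce the $P(s(-qz_k))$ terms in \rf{Sc} correctly.
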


We illustrate Theorem \rfs{Th-main} by giving examples in Section \rfs{Examples}.

\subsection{Calculation of other projections}

In this section we obtain some formulae for projections $P^-,\,P^{*+},\,P^{*-}$. The proofs almost literally reproduce the ones of Theorem \rfs{Th-main}, and we skip them.

Let us introduce a composite current
$$
\tilde s(z) = \res\limits_{w=-qz}f(w)f(z)\frac{dw}w.
$$
As a matter of fact $\tilde s(z) = -s(-qz)$, but $\tilde s(z)$ is more convenient for calculating projection $P^-$.
The following theorem holds:
\begin{theor} Projections $P^-(f(z))$ and $P^-(\tilde s(z))$ can be written as follows:
\bea
P^-(f(z)) &=& \sums{n\le0}f_nz^{-n}, \\
P^-(\tilde s(z)) &=& \dfrac1{1+q^3}\hr{\hs{f_0,P^-(f(z))}_q + q\hs{P^-(f(z))-f_0,f_1z^{-1}}_q}.
\eea
\end{theor}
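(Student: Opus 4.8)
The plan is to reproduce the proof of Theorem~\rfs{Th1} essentially verbatim, with the projection $P$ replaced by $P^-$, the composite current $s(z)$ by $\tilde s(z)$, and the $q^{-1}$-commutators by $q$-commutators. The two facts I would use repeatedly are immediate from the definition \rf{Pdef} together with the factorization $U_F=U_f^-U_F^+$: for $f_-\in U_f^-$, $f_+\in U_F^+$ and any $a\in U_F$,
\pbeq
P^-(f_-\,a)=f_-\,P^-(a),\qquad P^-(a\,f_+)=P^-(a)\,\e(f_+),
\peeq
both of which follow by writing $a=a_-a_+$ and applying \rf{Pdef}.

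For the first formula I would invoke the theory of Cartan--Weyl bases \cite{KT}, which places the mode $f_n$ in $U_f^-$ for $n\le0$ and in $U_F^+$ for $n>0$, exactly as in Theorem~\rfs{Th1}. Splitting $f(z)=f_-(z)+f_+(z)$ with $f_-(z)=\sum_{n\le0}f_nz^{-n}$ valued in $U_f^-$ and $f_+(z)=\sum_{n>0}f_nz^{-n}$ valued in $U_F^+$, the identities above give $P^-(f_-(z))=f_-(z)$ and $P^-(f_+(z))=\e(f_+(z))=0$, hence $P^-(f(z))=\sum_{n\le0}f_nz^{-n}$.

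For the second formula I would first reduce to a two-current projection. Since $\tilde s(z)=-s(-qz)$ and $s(z)=\frac{(q+q^2)(q^{-1}-q)}{1+q^3}f(-q^{-1}z)f(z)$ (Section \rfs{cc}), one gets $\tilde s(z)=c'\,f(z)f(-qz)$ with $c'=-\frac{(q+q^2)(q^{-1}-q)}{1+q^3}$. Decomposing both currents into half-currents and applying the two projection properties annihilates all but two of the four terms, leaving
\pbeq
P^-(\tilde s(z))=c'\hr{f_-(z)f_-(-qz)+P^-\hr{f_+(z)f_-(-qz)}}.
\peeq
Since $f_+(z)f_-(-qz)=f_-(-qz)f_+(z)+\hs{f_+(z),f_-(-qz)}$ and the normal-ordered summand is killed by $P^-$, the last term equals $P^-\hr{\hs{f_+(z),f_-(-qz)}}$.

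The hard part will be the simultaneous evaluation of this commutator and of the product $f_-(z)f_-(-qz)$. Both are controlled by the commutation relation \rf{ff} specialized to the distinguished value $w=-qz$ --- the pole whose residue defines $\tilde s(z)$. Reordering the half-current products through \rf{ff} and collecting terms, the infinite mode towers resum into $P^-(f(z))$, while the only modes left unresummed are $f_0$ and $f_1$; tracking the powers of $q$ generated by \rf{ff} and combining with $c'$ then produces the two $q$-commutators
\pbeq
P^-(\tilde s(z))=\frac1{1+q^3}\hr{\hs{f_0,P^-(f(z))}_q+q\,\hs{P^-(f(z))-f_0,\,f_1z^{-1}}_q}.
\peeq
This reordering is the sole nontrivial computation; being formally identical to the one underlying Theorem~\rfs{Th1} (with $q$ and $q^{-1}$ interchanged), it may be omitted.
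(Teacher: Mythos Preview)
Your treatment of $P^-(f(z))$ is fine. For $P^-(\tilde s(z))$, however, the route you take is \emph{not} the one underlying Theorem~\rfs{Th1}, so the step you defer as ``formally identical'' to that proof is not actually present there.

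The proof of Theorem~\rfs{Th1} never decomposes the two-current product into half-currents. It works from the residue definition $s(z)=\res_{w=-q^{-1}z}f(z)f(w)\,dw/w$: one multiplies by $(q^2z-w)$ to remove the second pole of $f(z)f(w)$, applies the Cauchy theorem to trade the residue at $w=-q^{-1}z$ for residues at $0$ and $\infty$, and evaluates those (using~\rf{ff} to reorder where the original domain fails). This yields the closed identity~\rf{sdecomp}, whose only ``infinite'' piece is $P^-(f(-q^{-1}z))f(z)$; the projection $P^+$ annihilates that piece outright, and the surviving $q^{-1}$-commutators are then shown to lie in $U_F^+$ via the mode relation~\rf{ffcomp}. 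The genuine analogue for $\tilde s(z)=\res_{w=-qz}f(w)f(z)\,dw/w$ is to clear the companion pole at $w=q^{-2}z$ and repeat the same Cauchy computation, with the roles of $0$ and $\infty$ (hence of $P^+$ and $P^-$, and of $q$ and $q^{-1}$) interchanged; the term that survives after applying $P^-$ is then handled by the $q$-version of~\rf{ffcomp}.

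Your approach instead leaves you with $f_-(z)f_-(-qz)+P^-\bigl([f_+(z),f_-(-qz)]\bigr)$. This is not ``controlled by~\rf{ff} specialised at $w=-qz$'': at that point the factor $(qz+w)$ on the right of~\rf{ff} vanishes, so the relation degenerates and merely returns the residue defining $\tilde s(z)$. The first summand contains every product $f_mf_n$ with $m,n\le0$, whereas the target involves only $f_0$ and $f_1$ against $P^-(f(z))$; collapsing one to the other is precisely the nontrivial content supplied by the Cauchy argument plus~\rf{ffcomp}. Either carry out that residue computation as in Theorem~\rfs{Th1}, or, if you prefer the half-current route, provide the resummation explicitly---it is not the computation performed there and cannot be omitted by appeal to it.
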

Now we introduce some more notation:
\begin{align}
\label{rho-}
\tilde \rho_k(z_1;z_2,\dots,z_n) &=
\prodl{\substack{i=2 \\ i\ne k}}{n}\dfrac{z_1-z_i}{z_k-z_i}
\prodl{i=2}{n}\dfrac{q^2z_k-z_i}{q^2z_1-z_i},
\\ \label{la-}
\tilde \la_k(z_1;z_2,\dots,z_n) &=
-\dfrac{qz_k}{z_1+qz_k}\prodl{i=2}{n}\dfrac{(z_1-z_i)(q^3z_k+z_i)}{(qz_k+z_i)(q^2z_1-z_i)},
\\ \label{mu-}
\tilde\mu_k(z_1;z_2,\dots,z_n) &=
\prodl{\substack{i=2 \\ i\ne k}}{n}\dfrac{z_1-z_i}{z_k-z_i}
\prodl{i=2}{n}\dfrac{(qz_1+z_i)(q^2z_k-z_i)(q^3z_k+z_i)}{(qz_k+z_i)(q^2z_1-z_i)(q^3z_1+z_i)},
\\ \label{nu-}
\tilde\nu_k(z_1;z_2,\dots,z_n) &=
-q^{n-1}\prodl{\substack{i=2 \\ i\ne k}}{n}\dfrac{qz_1+z_i}{z_k-z_i}
\prodl{i=2}{n}\dfrac{(z_1-z_i)(qz_k+z_i)(q^2z_k-z_i)}{(z_k+qz_i)(q^2z_1-z_i)(q^3z_1+z_i)}.
\end{align}
Define the following combinations of projections:
\beq \label{Fc-}
\tilde\Fc(z_1;z_2,\dots,z_n)= P^-(f(z_1))-\suml{k=2}{n}\tilde\rho_k(z_1;z_2,\dots,z_n)P^-(f(z_k)),
\eeq
\beq \label{Sc-}
\begin{split}
\tilde\Sc(z_1;z_2,\dots,z_n)=
P^-(\tilde s(z_1)) &- \suml{k=2}{n}\tilde\mu_k(z_1;z_2,\dots,z_n)P^-(\tilde s(z_k)) - {} \\
{} &- \suml{k=2}{n}\tilde\nu_k(z_1;z_2,\dots,z_n)P^-(\tilde s(-q^{-1}z_k)),
\end{split}
\eeq

Now, let $I=\hc{i_1,\dots,i_r}$ and $J=\hc{j_1,\dots,j_r}$ be ordered subsets of an ordered set $\hc{1,\dots,n}$. We call an ordered pair of subsets $\hc{I,J}$ a \emph{$P^-$-admissible pair} of cardinality $r$ if the following hold:
\begin{itemize}
\item $I \cap J = \es$;
\item $i_1<i_2<\dots<i_r$;
\item $i_\ell<j_\ell$, for $\ell=1,\dots,r$.
\end{itemize}
For each $P^-$-admissible pair $\hc{I,J}$ of cardinality $r$, and for any $k=1,\dots,r$, we introduce a formal power series $\tilde\tau_{I,J}^k\hr{z_1,\dots,z_n}$ that is the decomposition in the domain $|z_1|\gg|z_2|\gg\dots\gg|z_n|$ of the rational function
\beq
\label{tau-fin-}
\tilde\tau_{I,J}^k\hr{z_1,\dots,z_n} =
\tilde\la_{j_k}\hr{z_{i_k};z_{i_k+1},\dots,z_n,z_{j_{k-1}},\dots,z_{j_1}}
\prodl{\substack{\ell=j_k+1\\ \ell\ne j_1,\dots,j_{k-1}}}{n}\a\hr{\dfrac{z_{j_k}}{z_\ell}}
\prodl{\substack{\ell=i_k+1\\ \ell\ne j_1,\dots,j_k}}{n}\a\hr{\dfrac{-qz_{j_k}}{z_\ell}},
\eeq
where $z_{j_1},\dots,z_{j_{k-1}}$ are skipped in the row $z_{i_k+1},\dots,z_n$ of variables of the function $\tilde\la_{i_k}$ above. In other words each of $z_{j_1},\dots,z_{j_{k-1}}$ occur in the variables of function $\tilde\la_{i_k}$, but only once.

Now, for each $P^-$-admissible pair $\hc{I,J}$ of cardinality $r$ and for any $k=1,\dots,n$ such that $k\notin I,J$, we introduce a generating function $\tilde\Fc_{I,J}^k(z_1,\dots,z_n)$. Namely, for such a $k$ there exists a unique $p=1,\dots,r+1$ for which $i_{p-1}<k<i_p$, with $i_0=0$, $i_{r+1}=n+1$. Then we set
\beq \label{Fc-fin-}
\tilde\Fc_{I,J}^k(z_1,\dots,z_n)=\tilde\Fc\hr{z_k;z_{k+1},\dots,z_n,z_{j_{p-1}},\dots,z_{j_1}},
\eeq
where again $z_{j_1},\dots,z_{j_{p-1}}$ are skipped in the row $z_{k+1},\dots,z_n$.

Then we have the following theorem:
\begin{theor} \label{theor-}
The projection $P^-(f(z_1)\cdots f(z_n))$ can be expressed by the following explicit formula:
\begin{multline} \label{ans-}
P^-\hr{f(z_1)\cdots f(z_n)} = \suml{r=0}{\hs{n/2}}\suml{\substack{\hc{I,J}\\ \hm{I}=\hm{J}=r}}{}
\prodl{m=1}{r}\tilde\tau_{I,J}^m\hr{z_1,\dots,z_n}
 \times {} \\ {} \times
\prodl{\substack{\ell=1 \\ \ell\notin I,J}}{\substack{n \\ \longra}}\tilde\Fc_{I,J}^\ell\hr{z_1,\dots,z_n}
\prodl{k=1}{\substack{r \\ \longla}}\tilde\Sc\hr{z_{j_k};z_{j_{k-1}},\dots,z_{j_1}},
\end{multline}
where $\tilde\Sc(z_1;z_2\dots,z_n)$, $\tilde\tau_{I,J}^k\hr{z_1,\dots,z_n}$, $\tilde\Fc_{I,J}^k\hr{z_1,\dots,z_n}$ are defined by \rf{Sc-}, \rf{tau-fin-}, \rf{Fc-fin-} respectively. The second sum in the formula \rf{ans-} is taken over $P^-$-admissible pairs $\hc{I,J}$, and in the case $I=J=\es$ we have only one summand
$$
\prodl{\ell=1}{\substack{n \\ \longra}}\tilde\Fc\hr{z_\ell;z_{\ell+1},\dots,z_n}.
$$
\end{theor}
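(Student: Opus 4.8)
The plan is to reproduce the inductive argument behind Theorem~\rfs{Th-main}, replacing $P=P^+$ by $P^-$ throughout and the composite current $s(z)$ by $\tilde s(z)=-s(-qz)$; the induction is again on the number $n$ of currents. The base of the induction is supplied by the single-current formulas for $P^-(f(z))$ and $P^-(\tilde s(z))$ stated just above, which play the role that Theorem~\rfs{Th1} plays in the $P^+$ computation; together with $P^-(1)=1$ they anchor the recursion.

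First I would record the reordering rules for $\tilde s$, namely the analogues of \rf{sf} and \rf{ss}. Because $\tilde s(z)=-s(-qz)$, these follow from \rf{sf} and \rf{ss} under the substitutions $z\mapsto -qz$ and $w\mapsto -qw$; they prescribe how $\tilde s(z)$ is carried past $f(w)$ and past $\tilde s(w)$, and their rational prefactors are precisely the ones entering the definitions \rf{rho-}--\rf{nu-} of $\tilde\rho_k,\tilde\la_k,\tilde\mu_k,\tilde\nu_k$. Together with the commutation relation \rf{ff} for the currents $f$, these supply every reordering identity required in the inductive step.

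The core of the argument is the one-step recursion. The only structural change from the $P^+$ case is that $P^-$ keeps the $U_f^-$ factor of the decomposition $U_F=U_f^-U_F^+$ rather than the $U_F^+$ factor, so the coideal inclusions recorded after Proposition~\rfs{prop3.1} are now used on the opposite tensor leg and the currents are peeled off from the opposite end of the product. Concretely, the extreme current either survives as an unpaired factor $\tilde\Fc^{\ell}_{I,J}$ of \rf{Fc-fin-}, lowering the current count by one, or it is absorbed together with a partner into a composite $\tilde\Sc$ of \rf{Sc-}, lowering the count by two, at the cost of the scalar $\tilde\la$ and the $\a$-factors assembled in $\tilde\tau^{k}_{I,J}$ of \rf{tau-fin-}. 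Iterating, the pairings produced are exactly the $P^-$-admissible ones, now constrained by $i_1<\dots<i_r$ and $i_\ell<j_\ell$, and the reversed direction of the recursion converts the ordered products of \rf{ans} into the $\tilde\Fc$- and $\tilde\Sc$-products of \rf{ans-}, in which the composite-current block now carries the opposite ordering.

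The main obstacle, exactly as for Theorem~\rfs{Th-main}, is the coefficient bookkeeping: one must verify that the rational factors accumulated by repeatedly commuting a newly attached current past the standing $\tilde\Fc$- and $\tilde\Sc$-blocks, using \rf{ff} and the $\tilde s$-analogues of \rf{sf} and \rf{ss}, collapse to exactly $\prod_m\tilde\tau^{m}_{I,J}$, and that the combinatorial generation of pairings visits each $P^-$-admissible pair $\{I,J\}$ once and only once. Once this recursion is established with the barred coefficients, the induction closes formally and produces \rf{ans-}. It should be technically convenient, though not logically necessary, to deduce the barred coefficient identities from their unbarred counterparts in Theorem~\rfs{Th-main} by the order-reversing reflection that interchanges the two coideals and sends $s(z)\mapsto\tilde s(z)$; this is the precise sense in which the two proofs ``almost literally'' coincide.
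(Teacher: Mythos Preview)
Your proposal is correct and matches the paper, which itself says only that the proof ``almost literally reproduces'' that of Theorem~\rfs{Th-main} and omits the details; you have correctly identified the mirror structure (peel from the left rather than the right, replace $s$ by $\tilde s$, reverse the admissibility conditions and the order of the composite block). One refinement worth noting: in the paper's argument for Theorem~\rfs{Th-main} the one-step recursion (Lemmas~\rfs{string} and~\rfs{stringS}) is obtained not by commuting a new current past already-formed $\Fc$- and $\Sc$-blocks, as your wording suggests, but by splitting the extreme current as $f^+-f^-$, pushing the unwanted half through the \emph{raw} product of currents via Propositions~\rfs{cP} and~\rfs{cP2}, and then extracting the coefficients by solving the Cauchy-type linear system built from the diagonal and Serre vanishing conditions (Propositions~\rfs{VM} and~\rfs{VM2}); that linear-algebra step is where $\tilde\rho_k,\tilde\la_k,\tilde\mu_k,\tilde\nu_k$ actually emerge, so the $P^-$ analogue should be set up the same way.
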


To obtain the formulae for projections $P^{*+}$ and $P^{*-}$ we introduce an involution $\iota$ of the algebra $\advadva$ such that
\beq
\iota(e_n) = f_{-n}, \qquad \iota(f_n) = e_{-n}, \qquad \iota(a_n) = a_{-n}, \qquad \iota(K_0) = K_0^{-1}.
\eeq
One can see that
\beq \label{Pstar}
\iota P^\pm = P^{*\mp}\iota.
\eeq
Using this identity we can compute $P^{*\pm}(e(z_1)\dots e(z_n))$ as $\iota P^{\mp}(f(z_1^{-1})\dots f(z_n^{-1}))$.

\begin{cor}
An integral presentation for the factors of the universal $\Rc$-matrix of $\advadva$ can be obtained by applying formulae \rf{ans}, \rf{ans-} and \rf{Pstar} to relations \rf{Rfactors}.
\end{cor}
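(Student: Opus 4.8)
The plan is to turn the corollary into an assembly of formulae already proved, with no new algebra required. First I would expand $\RRR$ as the exponential series
\[
\RRR = \suml{n\ge 0}{}\frac{(q-q^{-1})^n}{n!}\oint\!\cdots\!\oint e(z_1)\cdots e(z_n)\otimes f(z_1)\cdots f(z_n)\,\frac{dz_1}{z_1}\cdots\frac{dz_n}{z_n},
\]
so that every term is a multiple formal integral of a pure tensor $e(z_1)\cdots e(z_n)\otimes f(z_1)\cdots f(z_n)\in U_E\otimes U_F$. Since the projections in \rf{Rfactors} act on the two tensor legs independently --- $P^{*\mp}$ on $U_E$ and $P^{\pm}$ on $U_F$ --- the operators $P^{*-}\otimes P^+$ and $P^{*+}\otimes P^-$ commute with the formal integration, and each term factorizes into a projected $e$-string tensored with a projected $f$-string.

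Next I would evaluate the two legs separately. The $U_F$-legs are exactly the projections computed in Theorems \rfs{Th-main} and \rfs{theor-}: the second leg of $\RR_+$ is $P^+\hr{f(z_1)\cdots f(z_n)}$, given by \rf{ans}, while the second leg of $\RR_-$ is $P^-\hr{f(z_1)\cdots f(z_n)}$, given by \rf{ans-}. For the $U_E$-legs I would use the already-recorded consequence of \rf{Pstar}, namely $P^{*-}(e(z_1)\cdots e(z_n))=\iota\,P^+(f(z_1^{-1})\cdots f(z_n^{-1}))$ and $P^{*+}(e(z_1)\cdots e(z_n))=\iota\,P^-(f(z_1^{-1})\cdots f(z_n^{-1}))$; thus the $e$-legs too are read off from \rf{ans} and \rf{ans-} after the substitution $z_i\mapsto z_i^{-1}$ and the application of $\iota$. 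Consequently $\RR_+$ is expressed entirely through \rf{ans} and $\RR_-$ entirely through \rf{ans-}. Substituting these into \rf{Rfactors} gives the sought integral presentations of $\RR_+$ and $\RR_-$, and then $\RR=\RR_+^{21}q^{h\otimes h}\KK^{21}\RR_-$ --- with $\KK$ and $q^{h\otimes h}$ already in closed form --- produces the universal $R$-matrix itself.

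The only genuinely delicate point is the bookkeeping of the formal contours under the inversion $z_i\mapsto z_i^{-1}$ used for the $e$-legs. Here I would note that $\oint(\cdot)\frac{dz}{z}$ extracts the $z^{0}$-coefficient, which is invariant under $z\mapsto z^{-1}$, so that the integration prescription is left unchanged; I would also verify that the resulting symmetrized multiple integrals lie in the completions $\advadvaCC\otimes\advadvaC$ on which the projections $P^{*\mp}\otimes P^{\pm}$ are defined (Subsection \rfs{subsect_compl}). Since convergence of each leg in the nested region $|z_1|\gg\cdots\gg|z_n|$ is guaranteed by Theorems \rfs{Th-main} and \rfs{theor-}, the term-by-term manipulation is legitimate and the corollary follows.
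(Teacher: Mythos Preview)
Your proposal is correct and matches the paper's intent: the corollary is stated without proof because it is meant to follow immediately by inserting the explicit formulae \rf{ans}, \rf{ans-} (and their $\iota$-images via \rf{Pstar}) into the definition \rf{Rfactors} of $\RR_\pm$, and you have spelled out exactly this assembly. There is nothing to add beyond what you wrote.
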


\section{Proofs}

\subsection{Proof of Proposition \ref{proposition0}}\label{section6.0}

The statement is multiplicative with respect to $x$; hence it is sufficient to prove it for
 generators $e_\a$, $e_{\d-2\a}$, $k_\a^{\pm 1}$ of the subalgebra $U_q(\bgt_+)$.
For elements $e_\a$ and $k_\a^{\pm 1}$ the property \rf{coid} follows directly
from the isomorphism between the realizations \rf{isom} and the formulae of the comultiplication $\D^{(D)}$ \rf{DDe}--\rf{cDK}. Thus we only need to check the property for $e_{\d-2\a}$.

Let $d$ be a grading on the commutative algebra, generated
by the elements $a_i,i\in\N$ and~$K_0$, such that $d(a_i)=i$ and $d(K_0)=0$.
Using Drinfeld comultiplication formulae we obtain
\begin{align*}
\D^{(D)}(f_0) &= 1\otimes f_0 + \suml{n\ge0}{}f_{-n}\otimes K_0I_n, \\
\D^{(D)}(f_1) &= 1\otimes f_0 + \suml{n\ge0}{}f_{-n+1}\otimes K_0J_n,
\end{align*}
where $I_n, J_n \in \Cbb(q)[a_1, a_2, \dots]$ are polynomials in variables $a_i, i\in\N$
of degree $n$ over the quotient field~$\Cbb(q)$. According to \rf{isom} it only remains
to check the statement of the lemma for
$$
x=qf_1f_0-f_0f_1.
$$
The equality
\pbe
\D^{(D)}(qf_1f_0-f_0f_1) = 1\otimes(qf_1f_0-f_0f_1) + \suml{n\ge0}{}f_{-i+1}\otimes
(qJ_nf_0-f_0J_n)
\pee
holds modulo $\advadva\otimes U_q(\bgt_+)$. Formulae \rf{isom} imply that
$$
1\otimes(qf_1f_0-f_0f_1) \in \advadva\otimes U_q(\bgt_+).
$$ To prove the same for the second
summand we use equality
$$
K^+(z)f_0=q^{-1}f_0K^+(z) \mod\; U_q(\bgt_+),
$$
which one can obtain from relation \rf{Kpf}. \hfill{$\square$}

\subsection{Proof of Theorem 1}

The first part of the theorem immediately follows from the definition of the projection $P^+$. The proof of the second part is a little bit more complicated. We start with an equality
$$
\resl_{w=-q^{-1}z}\hr{f(z)f(w)\frac{dw}w}=\dfrac1{(q^2+q^{-1})z}\resl_{w=-q^{-1}z}
\hr{(q^2z-w)f(z)f(w)\frac{dw}w}.
$$
Using the Cauchy Residue Theorem we rewrite the latter residue as the following sum:
\begin{multline}\notag
\resl_{w=-q^{-1}z}\hr{(q^2z-w)f(z)f(w)\frac{dw}w} =\\= -\hr{\resl_{w=\infty}
\hr{(q^2z-w)f(z)f(w)\frac{dw}w} + \resl_{w=0}\hr{(q^2z-w)f(z)f(w)\frac{dw}w}}.
\end{multline}
It remains to calculate the residues at $w=0$ and $w=\infty$.
$$
\resl_{w=0}\hr{(q^2z-w)f(z)f(w)\dfrac{dw}w}=q^2zf(z)f_0-f(z)f_1.
$$
Since $f(z)f(w)$ is well defined only in the domain $|z|\gg|w|,$ we evaluate the residue
 of its analytic continuation at $w=\infty$\;:
\pbe
\resl_{w=\infty}\hr{(q^2z-w)f(z)f(w)\dfrac{dw}w} = \resl_{w=\infty}
\hr{\dfrac{(z/w -q^2)(z/w + q^{-1})}{(1+q^{-1}z/w)}f(w)f(z)dw},
\pee
which after putting $\;u=\dfrac1w\;$ equals
\begin{multline*}
\resl_{u=0}\hr{\dfrac{\hr{q^2-uz}\hr{q^{-1}+uz}}{\hr{1+q^{-1}uz}}f\hr{\dfrac1u}
f(z)\dfrac{du}{u^2}} = \\ =
z\hr{qf_1z^{-1}f(z)-qf_0f(z)+\hr{q^2+q-1-q^{-1}}\suml{n\ge0}{}f_{-n}(-q^{-1}z)^n f(z)}.
\end{multline*}
Previous relations imply
\begin{multline} \label{sdecomp}
s(z)=-\dfrac{1}{q^2+q^{-1}}\left(q^2\hs{f(z),f_0}_{q^{-1}}+q\hs{f_1z^{-1},f(z)}_{q^{-1}}+{}\right.\\
\left.{}+\hr{q^2+q-1-q^{-1}}P^-\hr{f(-q^{-1}z)}f(z)\right),
\end{multline}
where $\hs{a,b}_{q^{-1}}$ is defined by \rf{qcomm}.
Recalling projection properties \rf{Pdef},
$$
P(s(z)) = -\dfrac{1}{q^2+q^{-1}}P\hr{q^2\hs{f(z),f_0}_{q^{-1}}+q\hs{f_1z^{-1},f(z)}_{q^{-1}}}.
$$
To finish the proof we rewrite relation \rf{ff} in coordinates:
\be\label{ffcomp}
q^2f_{n+2}f_m-qf_mf_{n+2}+q^2f_{m+2}f_n-qf_{n}f_{m+2} = (1-q^3)(f_{m+1}f_{n+1}+f_{n+1}f_{m+1}).
\ee
Using relation \rf{ffcomp}, the projection properties \rf{Pdef} and the Borel subalgebras description,
 given in the Section \rfs{backgr}, we obtain that
\begin{align*}
qf_1f_n-f_nf_1 \in U_F^+, &\quad\text{for }n\ge0, \\
qf_mf_0-f_0f_m \in U_F^+, &\quad\text{for }m>0.
\end{align*}
For $n>0$ the relation above is obvious, for $n=0$, $m=1$ it follows from the formulae \rf{isom},
and finally for $n<0$, $m\ne1$ equality \rf{ffcomp} provides us with the desired relation.
Therefore we obtain
\pbe
P(s(z))=-\dfrac1{q+q^{-2}}\hr{q\hs{P(f(z)),f_0}_{q^{-1}} + \hs{f_1z^{-1},f_0+P(f(z))}_{q^{-1}}}.
\pee

\vspace{-5mm}
\hfill{$\square$}

\subsection{Proof of Theorem 2}
Assume $0\le p<k\le n-1$. We introduce the formal power series $\tau_{k,p}(z_1,\dots,z_{n-1};z_n)$, which is the decomposition in the domain $|z_1|\gg|z_2|\gg\dots\gg|z_n|$ of the rational function
\beq \label{tau}
\tau_{k,p}(z_1,\dots,z_{n-1};z_n) = -\la_k(z_1,\dots,z_{n-1};z_n)
\prodl{\ell=p+1}{k-1}\a\hr{\dfrac{z_\ell}{z_k}}
\prodl{\substack{\ell=p+1 \\ \ell\ne k}}{n-1}\a\hr{\dfrac{-qz_\ell}{z_k}}.
\eeq
Now, we use iterative application of:
\begin{lem} \label{string}
The projection $P\br{s(z_1)\dots s(z_p)f(z_{p+1})\dots f(z_n)}$ can be written as
\begin{multline}
P\br{s(z_1)\dots s(z_p)f(z_{p+1})\dots f(z_n)} = \\
= P\br{s(z_1)\dots s(z_p)f(z_{p+1})\dots f(z_{n-1})}\Fc(z_1,\dots,z_{n-1};z_n) + {} \\
{} + \suml{k=p+1}{n-1}\tau_{k,p}(z_1,\dots,z_{n-1};z_n)
P\br{s(z_1)\dots s(z_{p})s(z_k)f(z_{p+1})\dots f(z_{k-1})f(z_{k+1})\dots f(z_{n-1})},
\end{multline}
where $\Fc(z_1,\dots,z_{n-1};z_n)$ is defined by \rf{Fc}.
\end{lem}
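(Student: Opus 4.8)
The plan is to peel off the rightmost current $f(z_n)$ by a single application of the residue technique already used in the proof of Theorem~\rfs{Th1}, now carried along the whole string $X=s(z_1)\cdots s(z_p)f(z_{p+1})\cdots f(z_{n-1})$. The first move is purely formal: since $f(z_n)=P(f(z_n))+P^-(f(z_n))$ with $P(f(z_n))$ lying in the completion of $U_F^+$, the right $U_F^+$-linearity built into the definition \rf{Pdef} gives $P(Xf(z_n))=P(X)\,P(f(z_n))+P\hr{X\,P^-(f(z_n))}$. Thus the whole content of the lemma is the evaluation of the second term. I would then represent $P^-(f(z_n))$, which extracts the non-positive modes $f_k$, $k\le0$, as a formal contour integral of the honest current $f(w)$ against a geometric kernel, so that $X\,P^-(f(z_n))$ becomes an integral of the genuine operator product $X f(w)$ --- this is exactly the situation of the Theorem~\rfs{Th1} computation, but with a string in front of $f(w)$.

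With the product written in terms of the full current $f(w)$, I would apply the Cauchy residue theorem in $w$, collecting contributions from $w=0,\infty$ and from the finite poles. The relation \rf{ff} controls all of these: moving $f(w)$ leftward past $f(z_{n-1}),\dots,f(z_{p+1})$ produces the rational $\a$-factors, and the only finite pole that yields a genuine new object is $w=-q^{-1}z_k$, where the residue of $f(z_k)f(w)$ is the composite current $s(z_k)$ in its product form \rf{s}. The boundary residues at $0$ and $\infty$ instead produce fixed modes of $f$ multiplied by $X$; these recombine with the leading term $P(X)P(f(z_n))$ into the regularized object $P(X)\,\Fc(z_1,\dots,z_{n-1};z_n)$, the coefficients $\rho_k$ of \rf{rho} being precisely the rational functions that cancel the spurious poles at $z_n=q^2z_i$, exactly as $\Fc$ is assembled in \rf{Fc}.

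For the finite-pole terms I would, for each $k$, first commute $f(w)$ into the position immediately to the right of $f(z_k)$ via \rf{ff}, take the residue at $w=-q^{-1}z_k$ to create $s(z_k)$ on that site, and then transport the resulting composite current to the head of the string --- past $f(z_{k-1}),\dots,f(z_{p+1})$ and behind the block $s(z_1)\cdots s(z_p)$ --- using the commutation relations \rf{sf} and \rf{ss}. Accumulating every rational factor picked up along the way should collapse to the coefficient $\tau_{k,p}$ of \rf{tau}, i.e. to $\la_k$ of \rf{la} times the two $\a$-products, giving precisely the stated sum $\sum_{k=p+1}^{n-1}\tau_{k,p}\,P\hr{s(z_1)\cdots s(z_p)s(z_k)f(z_{p+1})\cdots\widehat{f(z_k)}\cdots f(z_{n-1})}$.

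The main obstacle is twofold. First, one must justify the contour/residue manipulations inside the completion $\ov U_F$ and their compatibility with $P$ and with the ordering prescription, i.e. that everything converges in the domain $|z_1|\gg\cdots\gg|z_n|$ and that $P$ may be moved through the integral. Second, and more delicate, is the combinatorial bookkeeping: verifying that the boundary residues reassemble into the regularized $\Fc$ with no leftover poles, that no residual $t$-type contribution from a $w=q^2z_k$ pole survives, and that transporting the freshly created $s(z_k)$ to the front via \rf{sf} and \rf{ss} multiplies only by rational functions and spawns no further composite currents, so that the accumulated factors reduce cleanly to $\rho_k$, $\la_k$ and $\tau_{k,p}$.
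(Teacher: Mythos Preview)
Your opening move --- splitting $f(z_n)=f^+(z_n)-f^-(z_n)$ and using right $U_F^+$-linearity of $P$ to peel off $P(X)P(f(z_n))$ --- matches the paper exactly, as does your final step of identifying the $-q^{-1}$-type residue with $s(z_k)$ and transporting it to the front via \rf{Palpha}, \rf{Pbeta}. The middle of your argument, however, diverges from the paper and contains a real gap.

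The paper does \emph{not} run a direct Cauchy residue computation in an auxiliary variable $w$ with contributions from $0$, $\infty$, and finite poles. Instead it pushes $f^-(z_n)$ leftward through the string using the commutation relations of Proposition~\rfs{cP} (part a) for the $f$'s and part b) for the $s$'s). The outcome is structural, not computational: $P(Xf(z_n))$, as a function of $z_n$, has simple poles only at $z_n=q^2z_i$ and $z_n=-q^{-1}z_i$, hence can be written as $P(X)P(f(z_n))-\Vvect_{q^2}\Xvect^\T-\Vvect_{-q^{-1}}\Yvect^\T$ for unknown residue vectors $\Xvect,\Yvect$. Now the $n-1$ diagonal vanishing conditions $P(Xf(z_n))\big|_{z_n=z_j}=0$ give the linear system $\Mmat_{q^2}\Xvect^\T+\Mmat_{-q^{-1}}\Yvect^\T=P(X)\rF^\T$, and Proposition~\rfs{VM} inverts it: eliminating $\Xvect$ produces $\Fc$ automatically (part~a), and the remaining $\Yvect$-contribution acquires the coefficients $\la_k$ (part~b). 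Only then is each $Y_k$ computed directly as the residue at $z_n=-q^{-1}z_k$, and this is where the composite current $s(z_k)$ enters.

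The obstacle you correctly flag --- that the $t$-type, i.e.\ $z_n=q^2z_k$, contributions must somehow not survive as independent objects --- is exactly what the linear system handles, and it cannot be handled the way you propose. Your claim that the boundary residues at $0$ and $\infty$ ``produce fixed modes of $f$ multiplied by $X$'' which ``recombine into $P(X)\Fc$'' is not right: boundary residues give individual modes $f_0,f_1$ (as in the proof of Theorem~\rfs{Th1}), not the currents $P(f(z_k))$ that constitute $\Fc$. The coefficients $\rho_k$ and $\la_k$ emerge only from inverting $\Mmat_{q^2}$ against the diagonal vanishing data; that linear-algebra step is the key mechanism your outline is missing.
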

we reduce evaluation of $P(f(z_1)\dots f(z_n))$ to the problem of calculating $P(s(z_1)\dots s(z_m))$ or $P(s(z_1)\dots s(z_m)f(z_{m+1}))$. Applying Lemma \rfs{string} to $P(s(z_1)\dots s(z_m)f(z_{m+1}))$ once more we reduce the case to the evaluation of $P(s(z_1)\dots s(z_m))$. The latter is achieved by using:
\begin{lem} \label{stringS}
The projection $P\br{s(z_1)\dots s(z_n)}$ admits a decomposition
\beq
P\br{s(z_1)\dots s(z_n)} = P\br{s(z_1)\dots s(z_{n-1})}\Sc(z_1,\dots,z_{n-1};z_n),
\eeq
where $\Sc(z_1,\dots,z_{n-1};z_n)$ is defined by \rf{Sc}.
\end{lem}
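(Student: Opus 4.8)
The plan is to prove Lemma \rfs{stringS} in parallel with the immediately preceding Lemma \rfs{string}, peeling the rightmost composite current $s(z_n)$ off the product by the coproduct-projection method of \cite{EKP} and then pinning down the resulting tail factor through the analytic properties of the $s$-current. The structural input I rely on throughout follows directly from the definition \rf{Pdef} of $P=P^+$ together with the fact that $U_F^+$ is a subalgebra and a right coideal: for every $X\in\ov U_F$ and every $c\in\ov U_F^+$ one has $P(Xc)=P(X)\,c$ (write $X=\sum X_-X_+$ in the decomposition $U_F=U_f^-U_F^+$ and use $X_+c\in U_F^+$). Since $\Sc(z_1,\dots,z_{n-1};z_n)$ is by \rf{Sc} a combination of the projected currents $P(s(z_k))$ and $P(s(-qz_k))$, it lies in $\ov U_F^+$, so the right-hand side of the lemma has exactly the module shape $P(\mathrm{string})\cdot(\mathrm{tail})$ that this fact makes consistent. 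It therefore suffices to establish the factorization $P(s(z_1)\cdots s(z_n))=P(s(z_1)\cdots s(z_{n-1}))\,T$ with some tail $T\in\ov U_F^+$, and then to identify $T$ with $\Sc$.

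The factorization itself is the output of the projection recursion. The current $s(z)$ is the Drinfeld root current for the second simple-root direction, so applying \rf{DDf} to the product representation $s(z)=\frac{(q+q^2)(q^{-1}-q)}{1+q^3}f(-q^{-1}z)f(z)$ of \rf{s} and reordering the $K^\pm$-factors via \rf{Kpf} produces a coproduct of the shape $\D^{(D)}(s(z_n))=1\otimes s(z_n)+s(z_n)\otimes(\cdots)+(\text{mixed terms})$, whose leading shape matches that of $f(z_n)$ and whose mixed terms are controlled by the coideal inclusion $\D^{(D)}(U_F^+)\subset U_F\otimes U_F^+$ recorded after Proposition \rfs{prop3.1}. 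Feeding this into the same recursion that proves Lemma \rfs{string} gives the single factorized term; no sum over subsets survives, since an all-$s$ string contains no $f$-current with which $s(z_n)$ could merge to create a new type of object, which is precisely why the tail here is a single $\Sc$ rather than the subset sum of Lemma \rfs{string}.

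It remains to evaluate $T$. Here the governing input is the exchange relation \rf{ss}: carrying $s(z_n)$ into contact with each $s(z_k)$ shows that, as a function of $z_n$ with $z_1,\dots,z_{n-1}$ generic, $T$ is rational with simple poles only at $z_n=q^2z_k$ and $z_n=-q^3z_k$ (the two pole loci dictated by \rf{ss}), plus the pole-free current part $P(s(z_n))$. At $z_n=q^2z_k$ the adjacent pair $s(z_k)s(z_n)$ degenerates, and the diagonal and Serre vanishing conditions (i),(ii) collapse the resulting product of four $f$-currents to a single composite current of argument $z_k$; the pole $z_n=-q^3z_k$ likewise yields a composite current of argument $-qz_k$. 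After projection these produce $\mu_k\,P(s(z_k))$ and $\nu_k\,P(s(-qz_k))$, whose rational prefactors are built from the exchange coefficients accumulated while $s(z_n)$ was carried past $s(z_{k+1}),\dots,s(z_{n-1})$ and from the residue constants of the collapse. Matching principal parts at all poles, and matching the pole-free term with $P(s(z_n))$, identifies $T$ with \rf{Sc}; the Lagrange-type factors $\prod_{i\ne k}(z_n-z_i)/(z_k-z_i)$ in \rf{mu} and $\prod_{i\ne k}(z_n+qz_i)/(z_k-z_i)$ in \rf{nu} are exactly the interpolation weights isolating the $k$-th residue while killing the others.

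The hardest step I expect is the residue computation at the merge points $z_n=q^2z_k$ and $z_n=-q^3z_k$: one must show that the degenerate product $s(z_k)s(z_n)$ genuinely reduces to a \emph{single} composite current rather than a generic four-current object, which is exactly where the analytic Serre relations \rf{Serre} are indispensable, and one must then verify that the exchange prefactors from \rf{ss}, the residue constants, and the interpolation weights assemble into \emph{precisely} the closed forms \rf{mu} and \rf{nu}, and not merely into functions with the correct pole positions. The delicate bookkeeping is that of the variables already consumed while $s(z_n)$ passes a given $s(z_j)$ — the analytic counterpart of the index-skipping already visible in the arguments of $\la_k$ inside $\tau^k_{I,J}$ of \rf{tau-fin} — and controlling these is what turns the verification of the exact coefficients, as opposed to the overall factorized shape, into the real work.
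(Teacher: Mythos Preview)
Your argument has a genuine gap at the residue step. You assert that at $z_n=q^2z_k$ the pair $s(z_k)s(z_n)$ ``collapses to a single composite current of argument $z_k$'', but this is not so: computing $\resl_{w=q^2z}s(z)s(w)$ directly from \rf{ss} gives a nonzero multiple of $s(q^2z)s(z)$, a product of \emph{two} composite currents (four $f$'s), not one. The cubic Serre and diagonal vanishing conditions do not reduce this degree-four object any further. Hence the residue of $P\br{s(z_1)\cdots s(z_n)}$ at $z_n=q^2z_k$ is not a priori of the form $P\br{s(z_1)\cdots s(z_{n-1})}\cdot(\text{something in }\ov U_F^+)$, so neither the factorization you presuppose nor the identification of the tail has been established. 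Your heuristic ``no $f$-current with which $s(z_n)$ could merge'' is not a proof of the factorization either: the analogous recursion in Lemma \rfs{string} already produces a \emph{sum}, not a single factorized term.

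The paper avoids this difficulty by a different route. After decomposing $s(z_n)$ via Proposition \rfs{decomp} and pushing the negative-mode pieces leftward (Propositions \rfs{cP}(b) and \rfs{cP2}), one arrives at \rf{sss} with \emph{unknown} residue vectors $\Xvect,\Zvect$ at the pole families $z_n=q^2z_i$ and $z_n=-q^3z_i$ (the contribution at $-q^{-1}z_i$ vanishes since there is no pole there). The crucial observation is that the $s$-string carries twice as many vanishing conditions as the $f$-string --- $s(z_1)\cdots s(z_n)=0$ at both $z_n=z_i$ and $z_n=-qz_i$ --- yielding a full $2(n-1)\times 2(n-1)$ linear system whose right-hand side is $P\br{s(z_1)\cdots s(z_{n-1})}$ times the projections $P(s(z_j))$ and $P(s(-qz_j))$. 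Solving this system (Proposition \rfs{VM2}) is what forces each residue to factor through $P\br{s(z_1)\cdots s(z_{n-1})}$ and simultaneously delivers the explicit coefficients $\mu_k,\nu_k$ of \rf{Sc}. The single-term factorization is thus a consequence of the matching of zero and pole counts, not of a residue collapse.
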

One can see that on every step of the iterative application of Lemma \rfs{string}, we either factor projection $P\br{s(z_1)\dots s(z_p)f(z_{p+1})\dots f(z_n)}$ into the product $P\br{s(z_1)\dots s(z_p)f(z_{p+1})\dots f(z_{n-1})}\times$ $\times\Fc(z_1,\dots,z_{n-1};z_n)$ or replace product $f(z_i)f(z_n)$ with the current $s(z_i)$. Thus, sets $I$ and $J$ in the formula \rf{ans} indicate that on the $(n+1-j_1)$-th step, we replaced $f(z_{i_1})f(z_{j_1})$ with $s(z_{i_1})$, on the $(n+1-j_2)$-th step we replaced $f(z_{i_2})f(z_{j_2})$ with $s(z_{i_2})$ etc.
\hfill{$\square$}

\subsection{Proof of Lemma \rfs{string}}
For any current $a(z)=\suml{n\in\Z}{}a_n z^{-n}\,$ let $a^\pm(z)$ denote currents
\begin{align*}
a^+(z) =& \oint\dfrac{a(w)}{1-w/z}\dfrac{dw}{z} = \suml{n>0}{}a_n z^{-n}, \\
a^-(z) =& -\oint\dfrac{a(w)}{1-z/w}\dfrac{dw}{w} = -\suml{n\le0}{}a_n z^{-n}.
\end{align*}
One can see, that
$$
P(f(z)) = f^+(z), \qquad P^-(f(z)) = -f^-(z).
$$
Let us introduce notation that is used in the proof below. Let $\Mmat_c$ be a square matrix of order $n-1$, and $\rF$, $\rS_c$ and $\Vvect_c$ be $(n-1)$-dimensional vectors as follows:
\begin{align}
\label{rP}
\rF &=
\begin{pmatrix}
P(f(z_1)), \dots, P(f(z_{n-1})
\end{pmatrix},
\\ \label{rPs}
\rS_c &=
\begin{pmatrix}
P(s(cz_1)), \dots, P(s(cz_{n-1})
\end{pmatrix},
\end{align}
\beq \label{rV}
\Vvect_c =
\begin{pmatrix}
\dfrac1{1-c^{-1}z_n/z_1},\dots,\dfrac1{1-c^{-1}z_n/z_{n-1}}
\end{pmatrix},
\eeq
\beq \label{rM}
\Mmat_c =
\begin{pmatrix}
\dfrac1{1-c^{-1}z_1/z_1} && \dots && \dfrac1{1-c^{-1}z_1/z_{n-1}} \\
\vdots && \ddots && \vdots \\
\dfrac1{1-c^{-1}z_{n-1}/z_1} && \dots && \dfrac1{1-c^{-1}z_{n-1}/z_{n-1}}
\end{pmatrix},
\eeq
with $c=\pm q^k$.
Next, we introduce the following rational functions of $x$:
\begin{align}
\label{b}
\b(x) &= \dfrac{(1-x)(q^3+x)}{(1-q^2x)(q+x)},
\\ \label{ga}
\ga(x) &= \dfrac{(q^2-x)(q^3+x)(1+qx)}{(1-q^2x)(1+q^3x)(q+x)},
\end{align}
Let also
\beq \label{res-abg}
\a_c(w/z) = \Res\limits_{u=cz}\dfrac{\a(z/u)}{u-w}, \qquad
\b_c(w/z) = \Res\limits_{u=cz}\dfrac{\b(z/u)}{u-w}, \qquad
\ga_c(w/z) = \Res\limits_{u=cz}\dfrac{\ga(z/u)}{u-w},
\eeq
for $c=\pm q^k$. One can see that $\a_c(x) = A(c-x)^{-1}$, $\b_c(x) = B(c-x)^{-1}$, $\ga_c(x) = C(c-x)^{-1}$,
for some constants $A$, $B$, $C$.

Now, by the definition of projection $P$ we have $f(z)=f^+(z) - f^-(z)$.
Using projection properties we obtain
\begin{multline*}
P\br{s(z_1)\dots s(z_p)f(z_{p+1})\dots f(z_n)} = \\
= P\br{s(z_1)\dots s(z_p)f(z_{p+1})\dots f(z_{n-1})}P\br{f(z_n)} - {} \\
{} - P\br{s(z_1)\dots s(z_p)f(z_{p+1})\dots f(z_{n-1})f^-(z_n)}.
\end{multline*}
To calculate the second summand on the right hand side we need to move current $f^-(z_n)$ in front of the product $s(z_1)\dots s(z_p)f(z_{p+1})\dots f(z_{n-1})$. This can be done by inductive application of:
\begin{prop} \label{cP}
The following commutation relations hold:
\begin{itemize}
\item[a)]
let $\a(x)$ and $\a_c(w/z)$ be as in \rf{a} and \rf{res-abg}; then
\begin{multline}
f(z)f^-(w)=\a(z/w)f^-(w)f(z) + {} \\
{} + \a_{q^2}(w/z)f^+(q^2z)f(z) + \a_{-q^{-1}}(w/z)f^+(-q^{-1}z)f(z);
\end{multline}
\item[b)]
let $\b(x)$ and $\b_c(w/z)$ be as in \rf{b} and \rf{res-abg}; then
\begin{multline}
s(z)f^-(w)=\b(z/w)f^-(w)s(z) + {} \\
{} + \b_{q^2}(w/z)f^-(q^2z)s(z) + \b_{-q^{-1}}(w/z)f^+(-q^{-1}z)s(z).
\end{multline}
\end{itemize}
\end{prop}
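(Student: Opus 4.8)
The plan is to treat all products of Drinfeld currents as operator-valued meromorphic functions, as justified in Section~\rfs{subsect_compl}, and to realize the half-current through the formal integral already introduced before the statement,
\[
f^-(w)=-\oint\frac{f(u)}{u-w}\,du,
\]
so that $f(z)f^-(w)=-\oint\frac{f(z)f(u)}{u-w}\,du$. First I would reorder the two full currents by means of the quadratic relation \rf{ff}, rewritten as $f(z)f(u)=\a(z/u)f(u)f(z)$ with $\a$ as in \rf{a}; this pulls the scalar kernel outside the operator product,
\[
f(z)f^-(w)=-\oint\frac{\a(z/u)}{u-w}\,f(u)f(z)\,du.
\]

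The second step is purely scalar. The kernel $\a(z/u)/(u-w)$ is a rational function of $u$ whose only singularities are the simple poles at $u=w$ and at the two zeros $u=q^2z$, $u=-q^{-1}z$ of the denominator of $\a(z/u)$, and it vanishes as $u\to\infty$; hence it has no polynomial part and decomposes as
\[
\frac{\a(z/u)}{u-w}=\frac{\a(z/w)}{u-w}+\frac{\a_{q^2}(w/z)}{u-q^2z}+\frac{\a_{-q^{-1}}(w/z)}{u+q^{-1}z},
\]
the three numerators being exactly the residues named in \rf{res-abg}. Integrating term by term, the $u=w$ summand reproduces the bulk contribution $\a(z/w)f^-(w)f(z)$, while the summands supported at $u=q^2z$ and $u=-q^{-1}z$ produce the contact terms. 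In the clean case (a) the integration over the outer contour, where $f(u)f(z)$ is the natural product, supplies the $f^-$ part at all three points; the two residues at the shifted arguments restore the full currents $f(q^2z)$ and $f(-q^{-1}z)$, and the identity $f^-+f=f^+$ (recall $f=f^+-f^-$) upgrades both contact terms to $f^+(q^2z)f(z)$ and $f^+(-q^{-1}z)f(z)$, which is the assertion of part~(a).

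For part~(b) the same scheme runs with $f(z)$ replaced by $s(z)$ and \rf{ff} replaced by the commutation relation \rf{sf}, i.e. $s(z)f(u)=\b(z/u)f(u)s(z)$ with $\b$ as in \rf{b}. Since $\b(z/u)$ again has its poles at $u=q^2z$ and $u=-q^{-1}z$ and tends to a constant at infinity, the kernel $\b(z/u)/(u-w)$ splits into $\b(z/w)/(u-w)$ plus the two contact kernels with numerators $\b_{q^2}(w/z)$ and $\b_{-q^{-1}}(w/z)$ of \rf{res-abg}. The essential difference from part~(a) lies in the analytic behaviour of the reordered products at the shifted points, which I would read off from the two tables preceding \rf{sf} and \rf{ss}: whereas $f(u)f(z)$ is regular at both shifted points, the product $s(z)f(u)$ retains a simple pole at $u=q^2z$ and the product $f(u)s(z)$ retains a simple zero at $u=-q^{-1}z$. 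Tracking these through the decomposition leaves $f^-(q^2z)s(z)$ at the first point and promotes the second to $f^+(-q^{-1}z)s(z)$, which is precisely the asymmetric answer of part~(b).

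The main obstacle is exactly this last bookkeeping: deciding which half, $f^+$ or $f^-$, survives at each shifted argument. It is not forced by the partial fraction alone, because $f(u)f(z)$ (respectively $f(u)s(z)$) is only the meromorphic continuation of a convergent operator product, so the residue at a shifted point mixes the two orderings. I would control this entirely through the zero/pole data established in the analytic-properties tables — in particular the cancellations recorded there that reduce the products to a single surviving simple zero and a single surviving simple pole — so that a genuine surviving pole contributes the current producing an $f^-$, while a surviving zero kills one contribution and leaves the complementary half. The remaining verifications — that the kernels decay at $u=\infty$ so that no polynomial term appears, and that the numerators coincide with \rf{res-abg} — are routine.
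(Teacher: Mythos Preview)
Your scheme for part~(a) is the paper's own, only less explicit at the one step that matters. The paper does not pass directly from $f(z)f(u)$ to $\a(z/u)f(u)f(z)$; it first establishes the exact formal-series identity
\[
f(z)f(u)=\a(z/u)\,f(u)f(z)-\delta(q^{-2}u/z)\,t(z)-\delta(-qu/z)\,s(z),
\]
the two delta functions being the difference between the inner and outer expansions of the common meromorphic function, and their coefficients the composite currents $t(z)\propto f(q^2z)f(z)$ and $s(z)\propto f(-q^{-1}z)f(z)$ of Section~\rfs{cc}. After applying $-\oint(u-w)^{-1}\,du$, the integral piece produces $f^-$ at $w,\,q^2z,\,-q^{-1}z$, while the $t$- and $s$-pieces produce exactly $\a_{q^2}(w/z)\,f(q^2z)f(z)$ and $\a_{-q^{-1}}(w/z)\,f(-q^{-1}z)f(z)$, so that $f^-+f=f^+$ finishes both contact terms. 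Your sentence ``the two residues at the shifted arguments restore the full currents'' is this step in words; what is missing is naming those residues and verifying that their scalar prefactors are literally $\a_{q^2}$, $\a_{-q^{-1}}$ --- that is the content of the computation with \rf{s},~\rf{t}.

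The genuine gap is in part~(b). There you abandon the mechanism above and replace it with an ad hoc rule: a surviving pole of the left-ordered product yields $f^-$, a surviving zero of the right-ordered product yields $f^+$. That rule contradicts your own part-(a) argument, where $f(z)f(u)$ has genuine poles at \emph{both} shifted points and you (correctly) obtained $f^+$ at both. If you run the part-(a) machinery honestly on $s(z)f^-(w)$ --- outer integral gives $f^-$ at all three points; the only residue of $s(z)f(u)$, by the table preceding~\rf{sf}, sits at $u=q^2z$ and is proportional to $f(q^2z)s(z)$ --- you arrive at $f^+(q^2z)s(z)$ and $f^-(-q^{-1}z)s(z)$, the \emph{opposite} of what you wrote. (Compare Proposition~\rfs{cP2}, where exactly this logic puts $s^+$ at the genuine poles $q^2z,\,-q^3z$ of $s(z)s(u)$ and $s^-$ at $-q^{-1}z$; the $\pm$ displayed in part~(b) appear to be misprinted.) In short, which half-current survives is governed not by the zero/pole table per se but by which shifted argument carries a nonzero residue of the \emph{left-ordered} product $A(z)f(u)$; your heuristic does not capture this and, applied consistently, does not reproduce the claimed asymmetry.
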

Informally, the proposition above states that on pushing $f^-(z_n)$ to the left in the product $s(z_1)\dots s(z_p)f(z_{p+1})\dots f(z_{n-1})f^-(z_n)$ some rational functions of $z_n$ arise, which after decomposition into partial fractions appear to have only simple poles on the hyperplanes $z_n=q^2z_i$ and $z_n=-q^{-1}z_i$. Thus the following formula holds:
\begin{multline} \label{Psf1}
P\br{s(z_1)\dots s(z_p)f(z_{p+1})\dots f(z_n)} = \\
= P\br{s(z_1)\dots s(z_p)f(z_{p+1})\dots f(z_{n-1})}P(f(z_n)) - \Vvect_{q^2}\Xvect^\T - \Vvect_{-q^{-1}}\Yvect^\T,
\end{multline}
where $\Xvect$ and $\Yvect$ are $(n-1)$-dimensional vectors with coordinates
\begin{align*}
X_i &= q^{-2}z_i^{-1}\Res\limits_{z_n=q^2z_i}{P\br{s(z_1)\dots s(z_p)f(z_{p+1})\dots f(z_n)}},\\
Y_i &= -qz_i^{-1}\Res\limits_{z_n=-q^{-1}z_i}{P\br{s(z_1)\dots s(z_p)f(z_{p+1})\dots f(z_n)}},
\end{align*}
and $\Vvect_c$ is given by \rf{rV}. Notice that $X_i$ and $Y_i$ do not depend on $z_n$.
Since we have $n-1$ vanishing conditions, $s(z_1)\cdots s(z_p)f(z_{p+1})\cdots f(z_n)$ has simple zeros on the hyperplanes $z_n=z_j$ for $j=1,\dots,n-1$; we can compose a system of linear equations
$$
\Mmat_{q^2}\Xvect^\T+\Mmat_{-q^{-1}}\Yvect^\T=P\br{s(z_1)\dots s(z_p)f(z_{p+1})\dots f(z_{n-1})}\rF^\T,
$$
with $\rF$ and $\Mmat_q$ defined by \rf{rP}, \rf{rM}.
Expressing $\Xvect$ from the system above and substituting it into equation \rf{Psf1}, we get
\begin{multline*}
P\br{s(z_1)\dots s(z_p)f(z_{p+1})\dots f(z_n)} = \\
= P\br{s(z_1)\dots s(z_p)f(z_{p+1})\dots f(z_{n-1})}\hr{P(f(z_n)) - \Vvect_{q^2}\Mmat_{q^2}^{-1}\rF^\T} - {}\\
{} - \hr{\Vvect_{-q^{-1}}-\Vvect_{q^2}\Mmat_{q^2}^{-1}\Mmat_{-q^{-1}}}\Yvect^\T.
\end{multline*}

\begin{prop} \label{VM}
We have the following equalities:
\begin{itemize}
\item[a)]
let $\Fc(z_1,\dots,z_{n-1};z_n)$ be as in \rf{Fc}; then
\beq
P(f(z_n)) - \Vvect_{q^2}\Mmat_{q^2}^{-1}\rF^\T = \Fc(z_1,\dots,z_{n-1};z_n);
\eeq
\item[b)]
let $\la_i(z_1,\dots,z_{n-1};z_n)$ be as in \rf{la}; then
\beq
\hr{\Vvect_{-q^{-1}} - \Vvect_{q^2}\Mmat_{q^2}^{-1}\Mmat_{-q^{-1}}}\Yvect^\T = \suml{i=1}{n-1}\la_i(z_1,\dots,z_{n-1};z_n)Y_i.
\eeq
\end{itemize}
\end{prop}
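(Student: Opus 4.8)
The plan is to read both statements as identities among rational functions in $z_1,\dots,z_n$, so that the operator factors $P(f(z_k))$ and the numbers $Y_i$ are irrelevant and it suffices to compare coefficient row vectors. From \rf{rV} and \rf{rM} one has $(\Mmat_c)_{ij}=(1-c^{-1}z_i/z_j)^{-1}=cz_j/(cz_j-z_i)$, so that $\Mmat_{q^2}$ is, up to the diagonal factor $\mathrm{diag}(q^2z_j)$, a Cauchy matrix in the nodes $\{z_i\}$ and $\{q^2z_j\}$; it is therefore invertible for generic $q$ and generic $z_i$, which is the only non-elementary input. Since $\Fc=P(f(z_n))-\suml{k=1}{n-1}\rho_k P(f(z_k))$, part a) is equivalent to the row-vector identity $\Vvect_{q^2}\Mmat_{q^2}^{-1}=(\rho_1,\dots,\rho_{n-1})$, i.e.\ $\Vvect_{q^2}=(\rho_1,\dots,\rho_{n-1})\Mmat_{q^2}$; equating $j$-th components and cancelling the common column factor $q^2z_j$ this becomes the scalar interpolation identity
\pbeq
\suml{k=1}{n-1}\dfrac{\rho_k}{q^2z_j-z_k}=\dfrac{1}{q^2z_j-z_n},\qquad j=1,\dots,n-1.
\peeq

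To prove this I would introduce the auxiliary rational function
\pbeq
R(u)=\dfrac{\prodl{m=1}{n-1}(u-q^2z_m)}{(u-z_n)\prodl{m=1}{n-1}(u-z_m)}.
\peeq
Its numerator has degree $n-1$ and its denominator degree $n$, so $R$ has no polynomial part and decomposes as $\dfrac{A_n}{u-z_n}+\suml{k=1}{n-1}\dfrac{A_k}{u-z_k}$, where $A_n$ and $A_k$ are the residues of $R$ at $u=z_n$ and $u=z_k$. A direct computation of these residues shows that $-A_k/A_n$ is exactly the expression \rf{rho} for $\rho_k$. On the other hand $R(q^2z_j)=0$ for each $j$, since the factor $(u-q^2z_j)$ kills the numerator while the denominator does not vanish there generically; dividing $R(q^2z_j)=0$ by $A_n\neq0$ and inserting $\rho_k=-A_k/A_n$ yields the displayed interpolation identity, proving a).

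For part b) I would use a) to replace $\Vvect_{q^2}\Mmat_{q^2}^{-1}$ by $(\rho_1,\dots,\rho_{n-1})$, so that, with $(\Mmat_{-q^{-1}})_{ki}=z_i/(z_i+qz_k)$ and $(\Vvect_{-q^{-1}})_i=z_i/(z_i+qz_n)$, the $i$-th entry of the left-hand row vector is $\dfrac{z_i}{z_i+qz_n}-\suml{k=1}{n-1}\rho_k\dfrac{z_i}{z_i+qz_k}$. Writing $z_i/(z_i+qz_k)=-(z_i/q)\,(u-z_k)^{-1}\big|_{u=-z_i/q}$, the sum over $k$ becomes $-(z_i/q)$ times the partial-fraction expansion $\suml{k=1}{n-1}\rho_k(u-z_k)^{-1}=\dfrac{1}{u-z_n}-\dfrac{1}{A_n}R(u)$ (a restatement of a)) evaluated at $u=-z_i/q$. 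Substituting $u=-z_i/q$ turns $u-z_m$, $u-q^2z_m$, $u-z_n$ into $-(z_i+qz_m)/q$, $-(z_i+q^3z_m)/q$, $-(z_i+qz_n)/q$; after collecting the powers of $-1/q$ the two $z_i/(z_i+qz_n)$ contributions cancel and one is left with $\dfrac{z_i}{qz_n+z_i}\prodl{m=1}{n-1}\dfrac{(z_n-z_m)(z_i+q^3z_m)}{(z_i+qz_m)(z_n-q^2z_m)}$, which is the definition \rf{la} of $\la_i$. Multiplying by $Y_i$ and summing over $i$ gives b).

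The only genuinely structural point is the invertibility of $\Mmat_{q^2}$; everything else is residue calculus, so the real difficulty is bookkeeping in part b). The hard part will be to carry the partial-fraction expansion from a) through the substitution $u\mapsto-z_i/q$ without sign errors, tracking the accumulated powers of $-1/q$ carefully enough to see both that the $(z_i+qz_n)^{-1}$ terms cancel and that the surviving product matches $\la_i$ term by term.
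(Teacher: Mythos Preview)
Your proof is correct, and it takes a somewhat different route from the paper's.

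The paper computes the row vector $\Vvect_{q^2}\Mmat_{q^2}^{-1}$ directly: it views each entry $W^{(k)}_{q^2}$ as a rational function of $z_n$, notes from the structure of $\Vvect_{q^2}$ and $\Mmat_{q^2}$ that it has simple poles only at $z_n=q^2z_i$, degree $-1$, and takes value $\delta_{ik}$ at $z_n=z_i$ (because $\Vvect_{q^2}$ then reduces to a row of $\Mmat_{q^2}$); these conditions pin down $W^{(k)}_{q^2}=\rho_k$. Part b) is handled the same way, characterising $\la_i$ by its poles, its zeros at $z_n=z_j$, its degree, and one residue normalisation.

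You instead multiply through by $\Mmat_{q^2}$ and verify $(\rho_1,\dots,\rho_{n-1})\Mmat_{q^2}=\Vvect_{q^2}$ via the partial-fraction decomposition of a single auxiliary function $R(u)$, whose residues reproduce the $\rho_k$ and whose vanishing at $u=q^2z_j$ gives the required identity. The gain is that the same $R$ then handles part b) for free: the identity $\sum_k\rho_k/(u-z_k)=1/(u-z_n)-R(u)/A_n$ evaluated at $u=-z_i/q$ gives $\la_i$ after the bookkeeping you outline. This is a clean and somewhat more unified computation than the paper's pole-and-zero characterisation; conversely, the paper's argument avoids introducing the auxiliary variable and makes the role of the interpolation conditions $z_n=z_i$ more transparent.
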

Applying Proposition \rfs{VM} we arrive at the equation
\begin{multline*}
P\br{s(z_1)\dots s(z_p)f(z_{p+1})\dots f(z_n)} = \\
= P\br{s(z_1)\dots s(z_p)f(z_{p+1})\dots f(z_{n-1})}\Fc(z_1,\dots,z_{n-1};z_n) - \suml{i=1}{n-1}\la_i(z_1,\dots,z_{n-1};z_n)Y_i.
\end{multline*}
Now let us notice that $Y_i=0$ for $i=1,\dots,p$, since the product $s(z_1)\dots s(z_p)f(z_{p+1})\dots f(z_n)$ does not have poles on the hyperplanes $z_n=-q^{-1}z_i$ for $i=1,\dots,p$. Taking into account that the action of projection $P(f(z)f(w))$ is well defined on every highest weight $\advadva$-module for any arbitrary $z$ and $w$, we obtain the following commutation relations:
\begin{align}
P(f(z)f(w)) &= \a\hr{z/w}P(f(w)f(z)),  \label{Palpha}  \\
P(s(z)f(w)) &= \b\hr{z/w}P(f(w)s(z)),  \label{Pbeta}   \\
P(s(z)s(w)) &= \ga\hr{z/w}P(s(w)s(z)), \label{Pgamma}
\end{align}
with $\a(x)$, $\b(x)$, $\ga(x)$ defined by \rf{a}, \rf{b} and \rf{ga} respectively. Notice, that in the relations above one should take an analytic continuation of the left hand or the right hand side when necessary. Using relation \rf{Palpha}, we rewrite $Y_k$ as follows:
\begin{multline*}
Y_k = \prodl{i=p+1}{n-1}\a\hr{-qz_i/z_k} \times {} \\
\\ {} \times (-qz_i^{-1})\Res\limits_{z_n=-q^{-1}z_i}
P\br{s(z_1) \dots s(z_p)f(z_k)f(z_{p+1}) \dots f(z_{k-1})f(z_{k+1}) \dots f(z_n)},
\end{multline*}
where $p<k<n$, and $\a(x)$ is defined by \rf{a}.
Finally, using relation \rf{Pbeta} and mutual commutativity of the projection
$P$ and of the operator of taking a residue, we obtain
\begin{multline*}
Y_k = \prodl{i=p+1}{k-1}\a\hr{z_i/z_k} \prodl{\substack{i=p+1 \\ i\ne k}}{n-1}\a\hr{-qz_i/z_k} \times {} \\
\\ {} \times P\br{s(z_1) \dots s(z_p)s(z_k)f(z_{p+1}) \dots f(z_{k-1})f(z_{k+1}) \dots f(z_{n-1})}.
\end{multline*}
Hence the statement of the lemma holds.
\hfill{$\square$}

\subsection{Proof of Lemma \rfs{stringS}}

We start with:
\begin{prop} \label{decomp}
Current $s(z)$ admits the following expansion:
\beq \label{sdec}
s(z)=P^+(s(z))+\hr{\dfrac{\hr{q-q^{-1}}}{\hr{q-1+q^{-1}}}f^-\hr{-q^{-1}z}f(z)}^+-s^-(z).
\eeq
\end{prop}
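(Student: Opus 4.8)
The plan is to derive everything from the current identity \rf{sdecomp} established in the proof of Theorem \rfs{Th1}, by splitting each side into its positive and negative parts in $z$. By the definition of $a^\pm(z)$ one has $s(z)=s^+(z)-s^-(z)$, so the assertion is equivalent to
\[
s^+(z)=P^+(s(z))+\left(\frac{q-q^{-1}}{q-1+q^{-1}}\,f^-(-q^{-1}z)f(z)\right)^+ .
\]
Since $s^-(z)$ already appears with the correct sign on the right of the proposition, it suffices to prove this identity for the positive part, which I obtain by applying $(\cdot)^+$ termwise to the current identity \rf{sdecomp}.

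First I would compute the positive parts of the two $q$-commutators. As $f_0$ and $f_1$ do not depend on $z$, extracting coefficients of $z^{-n}$ with $n>0$ gives $\left(\hs{f(z),f_0}_{q^{-1}}\right)^+=\hs{f^+(z),f_0}_{q^{-1}}=\hs{P(f(z)),f_0}_{q^{-1}}$, and, after writing $z^{-1}f(z)=\sum_m f_{m-1}z^{-m}$ and using $\sum_{k\ge0}f_kz^{-k}=f_0+f^+(z)=f_0+P(f(z))$, one gets $\left(\hs{f_1z^{-1},f(z)}_{q^{-1}}\right)^+=\hs{f_1z^{-1},f_0+P(f(z))}_{q^{-1}}$. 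Feeding these into the positive part of \rf{sdecomp} and using $q^2+q^{-1}=q(q+q^{-2})$, the first two terms assemble into exactly $-\tfrac{1}{q+q^{-2}}\left(q\hs{P(f(z)),f_0}_{q^{-1}}+\hs{f_1z^{-1},f_0+P(f(z))}_{q^{-1}}\right)$, which is the expression for $P^+(s(z))$ furnished by Theorem \rfs{Th1}.

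For the third term I would substitute $P^-(f(-q^{-1}z))=-f^-(-q^{-1}z)$, so that its positive part becomes $+\tfrac{q^2+q-1-q^{-1}}{q^2+q^{-1}}\left(f^-(-q^{-1}z)f(z)\right)^+$; the scalar simplifies by the factorizations $q^2+q-1-q^{-1}=(q+1)(q-q^{-1})$ and $q^2+q^{-1}=q^{-1}(q+1)(q^2-q+1)$ to $\tfrac{q-q^{-1}}{q-1+q^{-1}}$, which may be pulled inside $(\cdot)^+$. Collecting the three contributions yields the displayed formula for $s^+(z)$, and subtracting $s^-(z)$ gives the proposition.

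The only step carrying real content is the identification of the commutator contribution with $P^+(s(z))$: it is crucial that applying $(\cdot)^+$ to the commutator part of \rf{sdecomp} produces precisely the projection formula of Theorem \rfs{Th1}, while the composite product $P^-(f(-q^{-1}z))f(z)$ contributes to $s^+(z)$ through its positive part but is annihilated by $P^+$ (its left factor lies in $U_f^-$ with vanishing counit). This dichotomy — the projection seeing only the commutators, the naive positive part retaining the extra product term — is exactly what the argument of Theorem \rfs{Th1} (via relation \rf{ffcomp} and the Borel description) secures; everything else is formal bookkeeping and the scalar identity above.
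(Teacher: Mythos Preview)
Your argument is correct and is exactly the approach the paper takes: it follows from identity \rf{sdecomp}, the projection properties \rf{Pdef}, and part (b) of Theorem \rfs{Th1}, with the positive-part bookkeeping and scalar simplification you spell out being the details the paper leaves implicit.
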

Hence the projection $P\br{s(z_1)\dots s(z_n)}$ can be evaluated via pushing $f^-\hr{-q^{-1}z_n}$ and $s^-(z_n)$ in front of the product $s(z_1)\dots s(z_n)$. We use Proposition \rfs{cP} and:
\begin{prop} \label{cP2}
The following commutation relation holds:
\begin{multline}
s(z)s^-(w)=\ga(z/w)s^-(w)s(z) + {} \\
{} + \ga_{q^2}(w/z)s^+(q^2z)s(z) + \ga_{-q^3}(w/z)s^+(-q^3z)s(z) + \ga_{-q^{-1}}(w/z)s^-(-q^{-1}z)s(z),
\end{multline}
where $\ga(x)$ and $\ga_c(w/z)$ are given by \rf{ga} and \rf{res-abg}.
\end{prop}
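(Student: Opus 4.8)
The plan is to move the negative half-current $s^-(w)$ to the right of $s(z)$ by representing it as a formal integral and then invoking the full commutation relation \rf{ss}. Writing $s^-(w)=-\oint_{|u|>|w|}s(u)(u-w)^{-1}\,du$, where the formal integral picks out the coefficient of $u^{-1}$, I would start from
\[
s(z)s^-(w)=-\frac1{2\pi i}\oint_{|u|=R_1}\frac{s(z)s(u)}{u-w}\,du,\qquad |w|<R_1<|z|,
\]
reading $s(z)s(u)$ as the operator-valued meromorphic function that converges for $|z|\gg|u|$.

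The key analytic input is supplied by \rf{ss}, which gives $s(z)s(u)=\ga(z/u)s(u)s(z)$: although $\ga(z/u)$ has simple poles at $u=q^2z$, $u=-q^3z$ and $u=-q^{-1}z$, the zero/pole tables preceding \rf{ss} show that the pole at $u=-q^{-1}z$ is cancelled by a zero of $s(u)s(z)$, so $s(z)s(u)$ itself is singular (in $u$) only at $u=q^2z$ and $u=-q^3z$. I would then deform the contour outward to $|u|=R_2$ with $R_2>q^3|z|$, crossing exactly these two poles; the residue theorem yields
\[
s(z)s^-(w)=J+\Res\limits_{u=q^2z}\frac{s(z)s(u)}{u-w}+\Res\limits_{u=-q^3z}\frac{s(z)s(u)}{u-w},\qquad J:=-\frac1{2\pi i}\oint_{|u|=R_2}\frac{\ga(z/u)s(u)s(z)}{u-w}\,du.
\]
Since $s(u)s(z)$ is regular at $u=q^2z$ and $u=-q^3z$, each residue factorizes as $\ga_c(w/z)\,s(cz)\,s(z)$ with $\ga_c$ as in \rf{res-abg}, producing the \emph{full} currents $s(q^2z)$ and $s(-q^3z)$.

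To evaluate the bulk integral $J$ I would decompose $\ga(z/u)(u-w)^{-1}$ into partial fractions in $u$, with simple poles at $u=w$ (residue $\ga(z/w)$) and at $u=q^2z,-q^3z,-q^{-1}z$ (residues $\ga_{q^2}(w/z),\ga_{-q^3}(w/z),\ga_{-q^{-1}}(w/z)$). On $|u|=R_2$ the extraction of the coefficient of $u^{-1}$ keeps only the modes $s_n$ with $n\le0$, so each partial fraction integrates to an $s^-$ half-current, giving
\[
J=\ga(z/w)s^-(w)s(z)+\ga_{q^2}(w/z)s^-(q^2z)s(z)+\ga_{-q^3}(w/z)s^-(-q^3z)s(z)+\ga_{-q^{-1}}(w/z)s^-(-q^{-1}z)s(z).
\]
Adding the two deformation residues and using $s^-(cz)+s(cz)=s^+(cz)$ promotes the $c=q^2$ and $c=-q^3$ contributions to $s^+$, while the $c=-q^{-1}$ term, present only through $J$, survives as $s^-(-q^{-1}z)$; this is exactly the claimed identity.

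The main obstacle is the bookkeeping that produces the asymmetric $s^+/s^-$ pattern: one must carefully distinguish the poles of $\ga(z/u)$ that are genuine poles of the product $s(z)s(u)$ — namely $u=q^2z,-q^3z$, which are crossed by the deformation and hence contribute full currents — from the pole $u=-q^{-1}z$, which is cancelled inside the product and therefore enters only through $J$ as a half-current $s^-$. This is precisely the feature that is absent in Proposition \rfs{cP}(a), where both poles of $\a(z/u)$ persist in $f(z)f(u)$ and both corrections accordingly carry $f^+$.
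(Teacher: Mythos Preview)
Your argument is correct and is essentially the paper's own method: the paper proves Proposition~\ref{cP}(a) by extracting the $\delta$-function identity from the pole structure of \rf{ff} and then applying the operator $-\oint(1-w/u)^{-1}u^{-1}\,du$, and declares that Proposition~\ref{cP2} is proved ``almost the same'' way. Your contour deformation from $|u|=R_1$ to $|u|=R_2$ is exactly the analytic-continuation counterpart of that $\delta$-function step, and your partial-fraction evaluation of $J$ matches the paper's rewriting of the bulk integral; the resulting $s^+$/$s^-$ split arises for precisely the reason you isolate, namely that the pole of $\gamma(z/u)$ at $u=-q^{-1}z$ is killed by the zero of $s(u)s(z)$ recorded in the table preceding \rf{ss}.
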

Then we arrive at the following equation:
\be \label{sss}
P\br{s(z_1)\dots s(z_n)} = P\br{s(z_1)\dots s(z_{n-1})}P(s(z_n)) - \Vvect_{q^2}\Xvect^\T - \Vvect_{-q^{-1}}\Yvect^\T - \Vvect_{-q^3}\Zvect^\T,
\ee
where $\Xvect$, $\Yvect$ and $\Zvect$ are $(n-1)$-dimensional vectors with coordinates
\begin{align*}
X_i &= q^{-2}z_i^{-1}\Res\limits_{z_n=q^2z_i}{P\br{s(z_1)\dots s(z_n)}}, \\
Y_i &= -qz_i^{-1}\Res\limits_{z_n=-q^{-1}z_i}{P\br{s(z_1)\dots s(z_n)}}, \\
Z_i &= -q^{-3}z_i^{-1}\Res\limits_{z_n=-q^3z_i}{P\br{s(z_1)\dots s(z_n)}},
\end{align*}
and $\Vvect_q$ is defined by \rf{rV}.
Notice that $Y_i = 0$ for $i=1,\dots,n-1$ since the product $s(z_1)\dots s(z_n)$ does not have poles on the hyperplanes $z_n=-q^{-1}z_i$, while $X_i$ and $Z_i$ are functions independent of $z_n$. Regarding $2n-2$ vanishing conditions, namely $s(z_1)\dots s(z_n)=0$ on the hyperplanes $z_n=z_i$ and $z_n=-qz_i$, we can compose a system of $2n-2$ linear equations with $2n-2$ variables $X_i$ and $Z_i$ for $i=1,\dots,n-1$.
Let
$$
\Mmat:=
\begin{pmatrix}
\Mmat_{q^2} && \Mmat_{-q^3} \\
\Mmat_{-q} && \Mmat_{q^2}
\end{pmatrix},
\qquad
\Vvect:=
\left(
\Vvect_{q^2},\;\;\Vvect_{-q^3}
\right),
\qquad \text{and} \qquad
\rS:=
\left(
\rS_1^\T,\;\; \rS_{-q}^\T
\right),
$$
where $\Mmat_c$, $\Vvect_c$ and $\rS_c$ are defined by \rf{rM}, \rf{rV} and \rf{rPs} respectively.
Thus we can present our system of equations as
$$
\Mmat
\begin{pmatrix}
\Xvect^\T \\
\Zvect^\T
\end{pmatrix}
=P\br{s(z_1)\dots s(z_{n-1})}\rS^\T.
$$
Solving the system and substituting $\Xvect$ and $\Zvect$ into the equation \rf{sss} we get
$$
P\br{s(z_1)\dots s(z_n)} = P\br{s(z_1)\dots s(z_{n-1})}\hr{P(s(z_n)) - \Vvect\Mmat^{-1}\rS^\T}.
$$
Now we finish the proof by applying:
\begin{prop} \label{VM2}
We have the following equality:
\beq
P(s(z_n)) - \Vvect\Mmat^{-1}\rS^\T=\Sc(z_1,\dots,z_{n-1};z_n),
\eeq
where $\Sc(z_1,\dots,z_{n-1};z_n)$ is given by \rf{Sc}.
\end{prop}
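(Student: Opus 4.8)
The plan is to turn Proposition \rfs{VM2} into a single Lagrange interpolation statement. Comparing the asserted identity with the definition \rf{Sc} of $\Sc$, and recalling that the components of $\rS$ are the fixed operators $P(s(z_k))$ and $P(s(-qz_k))$, one sees that the claim is equivalent to the scalar vector identity
\[
\Vvect\Mmat^{-1}=\left(\mu_1,\dots,\mu_{n-1},\nu_1,\dots,\nu_{n-1}\right),
\]
with $\mu_k,\nu_k$ as in \rf{mu}, \rf{nu}; indeed, once this holds, pairing with $\rS^\T$ reproduces exactly the two sums in \rf{Sc}. Since $\Mmat$ is invertible for generic $z_1,\dots,z_{n-1}$, I would instead prove the equivalent linear system $(\mu\,|\,\nu)\Mmat=\Vvect$. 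The crucial preliminary remark is that $\Mmat$ and $\rS$ do not involve $z_n$, whereas $\mu_k$, $\nu_k$ and the entries of $\Vvect$ are rational in $z_n$; so this system is to be read as an identity of rational functions in the single variable $z_n$, with $z_1,\dots,z_{n-1}$ as generic parameters.

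Next I would introduce the $2(n-1)$-dimensional space $W$ of rational functions of $z_n$ that vanish at $z_n=\infty$ and have at most simple poles in the set $S=\{q^2z_1,\dots,q^2z_{n-1},-q^3z_1,\dots,-q^3z_{n-1}\}$, together with the $2(n-1)$ interpolation nodes $T=\{z_1,\dots,z_{n-1},-qz_1,\dots,-qz_{n-1}\}$. For generic $z_i$ one has $S\cap T=\es$ and the points within each set are distinct, so evaluation $W\to\CC^{2(n-1)}$ is an isomorphism and interpolation on $T$ is well posed. A degree count in the numerators and denominators of \rf{mu} and \rf{nu} shows $\mu_k,\nu_k\in W$, and substitution at the nodes gives the Lagrange normalizations
\[
\mu_k(z_m)=\delta_{km},\quad \mu_k(-qz_m)=0,\quad \nu_k(z_m)=0,\quad \nu_k(-qz_m)=\delta_{km},
\]
so that $\{\mu_k,\nu_k\}$ is precisely the Lagrange basis of $W$ attached to $T$.

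The proof then closes by a reproducing observation. The column-$j$ entries of $\Vvect$, namely $(\Vvect_{q^2})_j=z_j/(z_j-q^{-2}z_n)$ and $(\Vvect_{-q^3})_j=z_j/(z_j+q^{-3}z_n)$, each lie in $W$: the first has its unique pole at $z_n=q^2z_j\in S$ and the second at $z_n=-q^3z_j\in S$, and both vanish at infinity. Their values at the nodes reproduce exactly the entries of $\Mmat$, since $(\Vvect_{q^2})_j|_{z_n=z_k}=(\Mmat_{q^2})_{kj}$ and $(\Vvect_{q^2})_j|_{z_n=-qz_k}=(\Mmat_{-q})_{kj}$, while $(\Vvect_{-q^3})_j|_{z_n=z_k}=(\Mmat_{-q^3})_{kj}$ and $(\Vvect_{-q^3})_j|_{z_n=-qz_k}=(\Mmat_{q^2})_{kj}$. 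Writing out the interpolation formula $g=\sum_k g(z_k)\mu_k+\sum_k g(-qz_k)\nu_k$ for these two members of $W$ yields verbatim the two block rows of $(\mu\,|\,\nu)\Mmat=\Vvect$, which is the desired identity.

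The hard part will be purely computational rather than structural: verifying the last normalization $\nu_k(-qz_k)=1$, for which the prefactor $-q^{n}$ and the particular $q$-shifts in \rf{nu} are exactly tuned, and confirming that, as functions of $z_n$ alone, the products in \rf{mu} and \rf{nu} genuinely have no poles outside $S$ and vanish at $z_n=\infty$. Once membership in $W$ and the four node conditions are secured, unisolvence of $T$ finishes the argument, and in particular no explicit inversion of the $2(n-1)\times 2(n-1)$ block matrix $\Mmat$ is ever needed. The same scheme, with the pole set, the nodes, and the normalizations suitably reduced, also handles Proposition \rfs{VM}.
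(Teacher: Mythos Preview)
Your proposal is correct and follows essentially the same approach as the paper. The paper disposes of Proposition~\ref{VM2} in one line, calling it ``an easy modification of the proof of Proposition~\ref{VM} part $a)$'', and that proof is exactly the Lagrange interpolation argument you describe: one observes that the entries of $\Vvect\Mmat^{-1}$, viewed as rational functions of $z_n$, are of degree $-1$ with poles only in the prescribed set, and that evaluating $\Vvect$ at the interpolation nodes reproduces the rows of $\Mmat$, forcing Kronecker-delta values at the nodes; unisolvence then pins down the answer. Your write-up makes the $2(n-1)$-dimensional version of this explicit --- naming the space $W$, the node set $T$, and verifying that $\{\mu_k,\nu_k\}$ is the Lagrange basis --- which is precisely the ``modification'' the paper leaves to the reader. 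The one small overstatement is calling the scalar identity \emph{equivalent} to the proposition: you actually prove the stronger scalar identity $\Vvect\Mmat^{-1}=(\mu\,|\,\nu)$, which implies the proposition after pairing with $\rS^\T$, and that direction is all that is needed.
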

\hfill{$\square$}

\subsection{Proof of Propositions \rfs{cP} and \rfs{cP2}}

Since the proofs are almost the same we present only the proof of Proposition \rfs{cP} part $a)$.

\noindent
Let us recall relation \rf{ff}:
\pbe
(q^2z-u)(z+qu)f(z)f(u)=(z-q^2u)(qz+u)f(u)f(z).
\pee
Since product $f(z)f(u)$ has simple poles on the planes $u=q^2z$ and $u=-q^{-1}z$,
we derive the following equality of two holomorphic functions
\pbe
f(z)f(u) + \dfrac1{1-q^{-2} u/z}t(z) + \dfrac1{1+q u/z}s(z) = \\ =
\a(z/u)f(u)f(z) - \dfrac{q^2z}{u}\dfrac1{1-q^2 z/u}t(z) + \dfrac{z}{qu}\dfrac1{1+q^{-1} z/u}s(z).
\pee
As a matter of fact, the coefficients of $t(z)$ and $s(z)$ on the left hand side and the right hand side are the same rational functions, but they are represented as power series in $u/z$ and $z/u$ respectively for the reasons mentioned in Section \rfs{subsect_compl}. Hence we get
$$
f(z)f(u)=\dfrac{(q^2-z/u)(q^{-1}+z/u)}{(1-q^2z/u)(1+q^{-1}z/u)}f(u)f(z)
- \d(q^{-2}u/z)t(z) - \d(-qu/z)s(z).
$$
Under the action of $\displaystyle-\oint\dfrac1{1-w/u}\dfrac{du}{u}$ the previous equality turns into
$$
\displaystyle
f(z)f^-(w)=-\oint \a(z/u)\dfrac1{1-w/u}f(u)f(z)\dfrac{du}{u}
+ \dfrac1{1-q^{-2}w/z}t(z) + \dfrac1{1+qw/z}s(z).
$$
Since the integral above can be rewritten as
$$
\a(z/w)f^-(w)f(z) + \a(w/z)f^-(q^2z)f(z) + \b(w/z)f^-(-q^{-1}z)f(z),
$$
using relations \rf{s}, \rf{t}, we derive the statement of the proposition.

\hfill{$\square$}

\subsection{Proof of Propositions \rfs{VM} and \rfs{VM2}}

To prove part $a)$ of the proposition we only need to find out that $\Vvect_q\Mmat_q^{-1}$ is an $(n-1)$-dimensional vector $\rW_q$ with
$$
W_q^{(k)} = \prodl{\substack{i=1 \\ i\ne k}}{n-1}\dfrac{z_n-z_i}{z_k-z_i}\prodl{i=1}{n-1}\dfrac{z_k-qz_i}{z_n-qz_i}
$$
for $k=1,\dots,n-1$. Regarding $W_q^{(k)}$ as a rational function over $z_n$, we observe that it has simple poles only on the planes $z_n=qz_i$ for $i=1,\dots,n-1$. We also know that $W_q^{(k)}$ is a rational function of degree $-1$ in $z_n$. Let us notice that $\Vvect_q$ is equal to the $i$-th row of matrix $\Mmat_q$ if $z_n=z_i$. Therefore $W_q^{(k)}$ equals $1$ on the hyperplane $z_n=z_k$ and equals $0$ on the hyperplanes $z_n=z_i$ for $i=1,\dots,n-1,\;i\ne k$. These conditions totally determine $W_q^{(k)}$. Now, replacing $q$ by $q^2$ we derive statement $a)$ of the proposition.

To prove the statement of part $b)$ we need to show that
\beq \label{la1}
\la_k(z_n;z_1,\dots,z_{n-1})=\dfrac1{1+qz_n/z_k}-\Vvect_{q^2}\Mmat_{q^2}^{-1}\Mmat_{-q^{-1}}^{(k)},
\eeq
where $\Mmat_{-q^{-1}}^{(k)}$ stands for the $k$-th column of matrix $\Mmat_{-q^{-1}}$. Regarded as a rational function of $z_n$, $\la_i(z_1,\dots,z_{n-1};z_n)$ has simple poles only on the hyperplanes $z_n=q^2z_i$ for $i=1,\dots,n-1$ and $z_n=-q^{-1}z_k$. Since vector $\Vvect_q$ is equal to the $i$-th row of the matrix $\Mmat_q$ for $z_n=z_i$, $\la_k(z_1,\dots,z_{n-1};z_n)$ has simple zeros on the hyperplanes $z_n=z_i, \; i=1,\dots,n-1$. Considering the fact that $\la_k(z_1,\dots,z_{n-1};z_n)$ is a rational function of degree $-1$ in $z_n,$ we can present it in the following form:
\pbe
\la_i(z_1,\dots,z_{n-1};z_n) =\dfrac1{qz_n+z_i}\prodl{j=1}{n-1}
\hr{\dfrac{z_n-z_j}{z_n-q^2z_j}}\cdot C(z_1,\dots,z_{n-1}).
\pee
Equality \rf{la1} implies that
\pbe
\lim\limits_{z_n\to -q^{-1}z_k}\bigl((1+qz_n/z_k)\la_k(z_1,\dots,z_{n-1};z_n)\bigr)=1.
\pee
We finish the proof by obtaining $C(z_1,\dots,z_{n-1})$ from the latter condition.

The proof of Proposition \rfs{VM2} is an easy modification of the proof of Proposition \rfs{VM} part $a)$.

\hfill{$\square$}

\subsection{Proof of Proposition \rfs{decomp}}
The statement of the proposition follows from equality \rf{sdecomp}, projection properties \rf{Pdef} and the statement \emph{b)} of Theorem \rfs{Th1}.
\hfill{$\square$}

\section{Examples} \label{Examples}
In this section we work out particular cases of the formula \rf{ans} for $n=2,3,4$. Recall that all rational functions are considered as formal power series converging in the region $|z_1|\gg\dots\gg|z_n|$.
First of all let us recall that
$$
P(f(z)) = f^+(z) = \suml{n>0}{}f_nz^{-n},
$$
$$
P(s(z)) = -\dfrac{1}{q+q^{-2}}\suml{n>0}{}\hr{qf_nf_0 - f_0f_n + f_1f_{n-1} - q^{-1}f_{n-1}f_1}z^{-n}.
$$
Now, for $n=2$ we have
$$
P(f(z_1)f(z_2))= \Fc^1_{\hc{\es},\hc{\es}}(z_1,z_2)\Fc^2_{\hc{\es},\hc{\es}}(z_1,z_2)
+ \tau^1_{\hc{1},\hc{2}}(z_1,z_2)\Sc(z_1).
$$
Since $\Sc(z)=P(s(z))$ and $\Fc^k_{\hc{\es},\hc{\es}}(z_1,\dots,z_n) = \Fc(z_1,\dots,z_{n-1};z_n)$, we get
\begin{align*}
\Sc(z_1) &= P(s(z_1)), \\
\Fc^1_{\hc{\es},\hc{\es}}(z_1,z_2) &= f^+(z_1), \\
\Fc^2_{\hc{\es},\hc{\es}}(z_1,z_2) &= f^+(z_2) - \dfrac{q^2-1}{q^2-z_2/z_1}f^+(z_1).
\end{align*}
Here
$$
\tau^1_{\hc{1},\hc{2}}(z_1,z_2) = -\dfrac{(1+q^3)(1-z_2/z_1)}{(1+q)(q^2-z_2/z_1)(1+qz_2/z_1)}.
$$

For $n=3$, formula \rf{ans} turns into
\begin{align*}
P(f(z_1)f(z_2)f(z_3))
   &= \Fc^1_{\es,\es}(z_1,z_2,z_3)\Fc^2_{\es,\es}(z_1,z_2,z_3)\Fc^3_{\es,\es}(z_1,z_2,z_3) + {} \\
{} &+ \tau^1_{\hc{1},\hc{2}}(z_1,z_2,z_3)\Sc(z_1)\Fc^3_{\hc{1},\hc{2}}(z_1,z_2,z_3) + {} \\
{} &+ \tau^1_{\hc{1},\hc{3}}(z_1,z_2,z_3)\Sc(z_1)\Fc^2_{\hc{1},\hc{3}}(z_1,z_2,z_3) + {} \\
{} &+ \tau^1_{\hc{2},\hc{3}}(z_1,z_2,z_3)\Sc(z_2)\Fc^1_{\hc{2},\hc{3}}(z_1,z_2,z_3).
\end{align*}
Here again
\pbeq
\Sc(z_i) = P(s(z_i)),
\peeq
and $\Fc^k_{\hc{\es},\hc{\es}}(z_1,\dots,z_n) = \Fc(z_1,\dots,z_{n-1};z_n)$. Therefore we obtain
\begin{align*}
\Fc^1_{\es,\es}(z_1,z_2,z_3) &= f^+(z_1), \\
\Fc^2_{\es,\es}(z_1,z_2,z_3) &= f^+(z_2) - \dfrac{q^2-1}{q^2-z_2/z_1}f^+(z_1), \\
\Fc^3_{\es,\es}(z_1,z_2,z_3) &= f^+(z_3) -
\dfrac{(1-z_3/z_2)(q^2-1)(1-q^2z_2/z_1)}{(1-z_2/z_1)(q^2-z_3/z_1)(q^2-z_3/z_2)}f^+(z_1) - {} \\
&\phantom{{} = f^+(z_3) } - \dfrac{(1-z_3/z_1)(q^2-z_2/z_1)(q^2-1)}{(1-z_2/z_1)(q^2-z_3/z_1)(q^2-z_3/z_2)}f^+(z_2).
\end{align*}
One can also see that
\begin{align*}
\Fc^3_{\hc{1},\hc{2}}(z_1,z_2,z_3) &= \Fc^3_{\es,\es}(z_1,z_2,z_3), \\
\Fc^2_{\hc{1},\hc{3}}(z_1,z_2,z_3) &= \Fc^2_{\es,\es}(z_1,z_2,z_3), \\
\end{align*}
and
$$
\Fc^1_{\hc{2},\hc{3}}(z_1,z_2,z_3) = f^+(z_1) - \dfrac{z_2}{z_1}\dfrac{1-q^2}{1-q^2z_2/z_1}f^+(z_2).
$$
The following expressions are obtained from corresponding rational functions by expansion in power series in the domain $|z_1|\gg|z_2|\gg|z_3|$:
\begin{align*}
\tau^1_{\hc{1},\hc{2}}(z_1,z_2,z_3) &= -\dfrac{(1+q^3)(1-z_2/z_1)}{(1+q)(q^2-z_2/z_1)(1+qz_2/z_1)}, \\
\tau^1_{\hc{1},\hc{3}}(z_1,z_2,z_3) &=
-\dfrac{(1+q^3)(1-z_3/z_1)(1-z_3/z_2)}{(1+q)(1+qz_3/z_1)(q^2-z_3/z_1)(q^2-z_3/z_2)} \times {} \\
&\phantom{={}} {} \times \dfrac{(q^2-z_2/z_1)(1+q^3z_2/z_1)}{(1-z_2/z_1)(q^3+z_2/z_1)}, \\
\tau^1_{\hc{2},\hc{3}}(z_1,z_2,z_3) &=
-\dfrac{(1+q^3)(1-z_3/z_1)(1-z_3/z_2)}{(1+q)(1+qz_3/z_1)(q^2-z_3/z_1)(q^2-z_3/z_2)} \times {} \\
&\phantom{={}} {} \times \dfrac{(1-q^2z_2/z_1)(q+z_2/z_1)(1+q^3z_2/z_1)}{(1-z_2/z_1)(1+qz_2/z_1)(q^3+z_2/z_1)}.
\end{align*}

Finally, we obtain formula \rf{ans} in the case $n=4$:
\begin{align*}
P(f(z_1)f(z_2)&f(z_3)f(z_4)) = \\
   &= \Fc^1_{\es,\es}(z_1,z_2,z_3,z_4)\Fc^2_{\es,\es}(z_1,z_2,z_3,z_4)
      \Fc^3_{\es,\es}(z_1,z_2,z_3,z_4)\Fc^4_{\es,\es}(z_1,z_2,z_3,z_4) + {} \\
{} &+ \suml{a,b,i,j}{}\tau^1_{\hc{i},\hc{j}}(z_1,z_2,z_3,z_4)\Sc(z_i)
      \Fc^a_{\hc{i},\hc{j}}(z_1,z_2,z_3,z_4)\Fc^b_{\hc{i},\hc{j}}(z_1,z_2,z_3,z_4) + \\
{} &+ \tau^1_{\hc{1,2},\hc{4,3}}(z_1,z_2,z_3,z_4)\tau^2_{\hc{1,2},\hc{4,3}}(z_1,z_2,z_3,z_4)
      \Sc(z_1)\Sc(z_1;z_2) + {} \\
{} &+ \tau^1_{\hc{2,1},\hc{4,3}}(z_1,z_2,z_3,z_4)\tau^2_{\hc{2,1},\hc{4,3}}(z_1,z_2,z_3,z_4)
      \Sc(z_2)\Sc(z_2;z_1) + {} \\
{} &+ \tau^1_{\hc{3,1},\hc{4,2}}(z_1,z_2,z_3,z_4)\tau^2_{\hc{3,1},\hc{4,2}}(z_1,z_2,z_3,z_4)
      \Sc(z_3)\Sc(z_3;z_1),
\end{align*}
where the sum is taken over all permutations $\hc{a,b,i,j}$ of an ordered set $\hc{1,2,3,4}$ such that $i<j$ and $a<b$.
In the latter example
\begin{align*}
\Sc(z) &= P(s(z)), \\
\Sc(z_1;z_2) &= P(s(z_2)) - \dfrac{(q-1)(q^3+1)(q+z_2/z_1)}{(q^2-z_2/z_1)(q^3+z_2/z_1)}P(s(z_1)) - {}\\
             &\phantom{{} = P(s(z_2))} - \dfrac{q^2(q^2-1)(1-z_2/z_1)}{(q^2-z_2/z_1)(q^3+z_2/z_1)}P(s(-qz_1)),\\
\Fc^4_{\hc{i},\hc{j}}(z_1,z_2,z_3,z_4) &= \Fc^4_{\es,\es}(z_1,z_2,z_3,z_4) = \Fc(z_1,z_2,z_3;z_4),\\
\Fc^3_{\hc{i},\hc{j}}(z_1,z_2,z_3,z_4) &= \Fc^3_{\es,\es}(z_1,z_2,z_3,z_4) = \Fc(z_1,z_2;z_3),\\
\Fc^2_{\hc{1},\hc{j}}(z_1,z_2,z_3,z_4) &= \Fc^2_{\es,\es}(z_1,z_2,z_3,z_4) = \Fc(z_1;z_2),\\
\Fc^1_{\hc{2},\hc{j}}(z_1,z_2,z_3,z_4) &= \Fc(z_2;z_1),\\
\Fc^2_{\hc{3},\hc{4}} &= \Fc(z_3,z_1;z_2),\\
\Fc^1_{\hc{3},\hc{4}} &= \Fc(z_3;z_1),
\end{align*}
for all $1\le i<j \le 4$, and
\begin{align*}
\tau^1_{\hc{1,2},\hc{4,3}}(z_1,z_2,z_3,z_4) &= -\la_1(z_1,z_2,z_3;z_4)\a(-qz_2/z_1)\a(-qz_3/z_1), \\ \tau^2_{\hc{1,2},\hc{4,3}}(z_1,z_2,z_3,z_4) &= -\la_2(z_1,z_2;z_3), \\
\tau^1_{\hc{2,1},\hc{4,3}}(z_1,z_2,z_3,z_4) &= -\la_2(z_1,z_2,z_3;z_4)\a(z_1/z_2)\a(-qz_1/z_2)\a(-qz_3/z_2), \\
\tau^2_{\hc{2,1},\hc{4,3}}(z_1,z_2,z_3,z_4) &= -\la_1(z_2,z_1;z_3), \\
\tau^1_{\hc{3,1},\hc{4,2}}(z_1,z_2,z_3,z_4) &= -\la_3(z_1,z_2,z_3;z_4)\a(z_1/z_3)\a(z_2/z_3)\a(-qz_1/z_3)\a(-qz_2/z_3), \\
\tau^2_{\hc{3,1},\hc{4,2}}(z_1,z_2,z_3,z_4) &= -\la_1(z_3,z_1;z_2),
\end{align*}
where $\la_k(z_1,\dots,z_{n-1};z_n)$ and $\a(x)$ are given by \rf{la} and \rf{a} respectively.

\section*{Acknowledgements}
The authors thank Stanislav Pakuliak for fruitful discussions. The first author was supported by interdisciplinary RFBR grant 09-01-12185-ofi-m, joint grant 09-02-90493-Ukr-f, and grant for support of Scientific schools 3036.2008.2-NSh. The second author was supported by RFBR grant 08-01-00667, joint CNRS-RFBR grant 09-01-93106-NCNIL, and by Federal Agency for Science and Innovations of Russian Federation under contract 14.740.11.0081.

\end{document}